\let\l@section\l@chapter
\newtheorem{theorem}{Theorem}
\newtheorem{Definition}[theorem]{Definition}
\newtheorem{Proposition}[theorem]{Proposition}
\newcommand{\bigO}{\mathcal{O}}
\title{Distributed Principal Component Analysis \\ with Limited Communication}
\renewcommand{\paragraph}[1]{\vspace{-0.1em} \noindent \textbf{#1}}
\author{%
  Foivos Alimisis \\
  Department of Mathematics\\
  University of Geneva
 
  \And
  Peter Davies \\
  Department of Computer Science \\
  University of Surrey
  
  \And
  Bart Vandereycken \\
  Department of Mathematics\\
  University of Geneva
  
 \And
  Dan Alistarh \\
  IST Austria \& \\ Neural Magic, Inc.
 
}
\begin{document}

\maketitle
\begin{abstract}

We study efficient distributed algorithms for the fundamental problem of principal component analysis and leading eigenvector computation on the sphere, when the data are randomly distributed among a set of computational nodes. 
We propose a new quantized variant of Riemannian gradient descent to solve this problem, and prove that the algorithm converges with high probability under a set of necessary spherical-convexity properties. We give bounds on the number of bits transmitted by the algorithm under common initialization schemes, and investigate the dependency on the problem dimension in each case.


\end{abstract}
\section{Introduction}

The need to process ever increasing quantities of data has motivated significant research interest into efficient \emph{distributed} algorithms for standard tasks in optimization and data analysis; see, e.g.,~\cite{jaggi2014communication, shamir2014communication, garber2017communication}. 
In this context, one of the fundamental metrics of interest is \emph{communication cost}, measured as the number of bits sent and received by the processors throughout computation. 
Communication-efficient variants are known for several classical tasks and algorithms, from basic ones such as {mean estimation}~\cite{suresh2017distributed,konevcny2018randomized, davies2020distributed} to variants of gradient descent~\cite{konevcny2016federated, horvath2019stochastic} or even higher-order optimization~\cite{alimisis2021communication,islamov2021distributed}. 

By contrast, much less is known concerning the communication cost for classical problems in data analysis, such as Principal Component Analysis (PCA)~\cite{hotelling1933analysis, bishop2006pattern}, or spectral clustering~\cite{ng2001spectral}. 
In this paper, we will focus on the closely-related problem of computing the leading eigenvector of the data covariance matrix, i.e., determining the first principal component, in the setting where $d$-dimensional data comes from an unknown distribution, and is split among $n$ machines.

Given the fundamental nature of PCA, there is a rich body of work on solving \emph{sequential} variants, both in the Euclidean domain~\cite{oja1985stochastic, shamir2015stochastic, shamir2016fast} 
and, more recently, based on Riemannian optimization. This is because maximizing the Rayleigh quotient can be re-interpreted in terms of a hyper-sphere, leading to an unconstrained Riemannian optimization problem~\cite{absil2009optimization, zhang2016first, zhang2016riemannian}.  
These references have exclusively focused on the \emph{sequential} setting, where the data can be stored and processed by a single node. 
However, in many settings the large volume of data requires processing by multiple nodes, which has inspired significant work on reducing the distribution overheads of PCA, e.g.~\cite{balcan2014improved, garber2017communication}, specifically on obtaining efficient algorithms in terms of \emph{round complexity}.  

More precisely, an impressive line of work considered the \emph{deterministic} setting~\cite{kannan2014principal, balcan2014improved, boutsidis2016optimal, fan2019distributed}, where the input is partitioned arbitrarily between machines, for which both round-efficient and communication-efficient solutions are now known. The general approach is to first perform a careful low-rank approximation of the local data and then communicate iteratively to reconstruct a faithful estimate of the covariance matrix. Due to the worst-case setting, the number of communication rounds scales polynomially in the eigengap $\delta$ and in the precision $\varepsilon$, making them  costly in practice.

Thus, on the practical side, it is common to assume a setting with stochastic rather than worst-case data partitioning. Specifically, Garber et al.~\cite{garber2017communication} consider this setting, and note that standard solutions based on distributed Power Method or Lanczos would require a number of communication rounds which increases with the sample size, which is impractical at scale. They circumvent this issue by replacing explicit Power Method iterations with a set of convex optimization problems, which can be solved in a round-efficient manner. 
Huang and Pan~\cite{huang2020communication} proposed a round-efficient solution, by leveraging the connection to Riemannian optimization. Their proposal relies on a procedure which allows the update of node-local variables via a surrogate gradient, whose deviation from the global gradient should be bounded. Thus, communication could only be used to reach consensus on the local variables among the nodes. Unfortunately, upon close inspection, we have noted significant technical gaps in their analysis, which we discuss in detail in the next section.

 The line of work in the stochastic setting focused primarily on \emph{latency cost}, i.e., the number of communication rounds for solving this problem. For highly-dimensional data and/or highly-distributed settings, it is known that \emph{bandwidth cost}, i.e., the number of bits which need to be transmitted per round, can be a significant bottleneck, and an important amount of work deals with it for other classic problems, e.g.~\cite{suresh2017distributed,horvath2019stochastic,islamov2021distributed}. By contrast, much less is known concerning the communication complexity of distributed PCA.

\paragraph{Contribution.} 
In this paper, we provide a new algorithm for distributed leading eigenvector computation, which specifically minimizes the total number of bits sent and received by the nodes. Our approach leverages the connection between PCA and Riemannian optimization; specifically, we propose the first distributed variant of Riemannian gradient descent, which supports \emph{quantized communication}. We prove convergence of this algorithm utilizing a set of necessary convexity-type properties, and bound its communication complexity under different initializations. 

At the technical level, our result is based on a new \emph{sequential} variant of Riemannian gradient descent on the sphere. This may be of general interest, and the algorithm can be generalized to Riemannian manifolds of bounded sectional curvatures using more sophisticated geometric bounds.  We strengthen the gradient dominance property proved in \cite{zhang2016riemannian}, by proving that our objective is weakly-quasi-convex in a ball of some minimizer and combining that with a known quadratic-growth condition. We leverage these properties by adapting results of \cite{bu2020note} in the Riemannian setting and choosing carefully the learning-rate in order to guarantee convergence for the single-node version. 

This algorithm is specifically-designed to be amenable to distribution: specifically, we port it to the distributed setting, and add communication-compression in the tangent spaces of the sphere, using a variant of the lattice quantization technique of~\cite{davies2020distributed},  which we port from the context of standard gradient descent. We provide a non-trivial analysis for the communication complexity of the resulting algorithm under different initializations. We show that, under random initialization, a solution can be reached with total communication that is quadratic in the dimension $d$, and linear in $1 / \delta$, where $\delta$ is the gap between the two biggest eigenvalues; when the initialization is done to the leading eigenvector of one of the local covariance matrices, this dependency can be reduced to linear in $d$.



\section{Setting and Related Work}

\paragraph{Setting.} We assume to be given $m$ total samples coming from some distribution $\mathcal{D}$, organized as a global $m \times d$ data matrix $M$, which is partitioned row-wise among $n$ processors, with node $i$ being assigned the matrix $M_i$, consisting of $m_i$ consecutive rows, such that $\sum_{i = 1}^n m_i = m$. As is common, let $A := M^{T} M = \sum_{i=1}^d M_i^T M_i$ be the global covariance matrix, and $A_i:= M_i^T M_i$ the local covariance matrix owned by node $i$. We denote by $\lambda_1, \lambda_2 ,..., \lambda_d$ the eigenvalues of $A$ in descending order and by $v_1,v_2,...,v_d$ the corresponding eigenvectors. Then, it is well-known that, if the gap $\delta=\lambda_1-\lambda_2$ between the two leading eigenvalues of $A$ is positive, we can approximate the leading eigenvector by solving the following empirical risk minimization problem up to accuracy $\varepsilon$:
\begin{equation} 
\label{eq:erm}
    x^\star = \textnormal{argmin}_{x \in \mathbb{R}^d} \left( -\frac{x^{T} A x}{\|x\|^2} \right)= \textnormal{argmin}_{x \in \mathbb{R}^d, \| x \|_2 = 1} \left( -x^{T} A x \right)=\textnormal{argmin}_{x \in \mathbb{S}^{d-1}} \left( -x^{T} A x \right),
\end{equation}
where $\mathbb{S}^{d-1}$ is the $(d-1)$-dimensional sphere.

We define $f: \mathbb{S}^{d-1} \rightarrow \mathbb{R}$, with $f(x)=-x^T A x$ and $f_i: \mathbb{S}^{d-1} \rightarrow \mathbb{R}$, with $f_i(x)=-x^T A_i x$. Since the inner product is bilinear, we can write the global cost as the sum of the local costs: 
\newline
$f(x)=\sum_{i=1}^n f_i(x)$.

In our analysis we use notions from the geometry of the sphere, which we recall in Appendix~\ref{app:sphere}.

\paragraph{Related Work.} As discussed, there has been a significant amount of research on efficient variants of PCA and related problems~\cite{bishop2006pattern, oja1985stochastic, shamir2015stochastic, shamir2016fast, absil2009optimization, zhang2016first, zhang2016riemannian}. 
Due to space constraints, we focus on related work on communication-efficient algorithms. In particular, we discuss the relationship to previous \emph{round-efficient} algorithms; to our knowledge, this is the first work to specifically focus on the bit complexity of this problem in the setting where data is randomly partitioned. More precisely, previous work on this variant implicitly assumes that algorithms are able to transmit real numbers at unit cost. 

The straightforward approach to solve the minimization problem~\eqref{eq:erm} in a distributed setting, where the data rows are partitioned, would be to use a distributed version of the Power Method, Riemannian gradient descent (RGD), or the Lanczos algorithm. In order to achieve an $\varepsilon$-approximation of the minimizer $x^\star$, the latter two algorithms require $\tilde{\bigO}( \lambda_1 \log(1/\varepsilon) / \delta)$ rounds, where the $\tilde{\bigO}$ notation hides poly-logarithmic factors in $d$.  
Distributed Lanczos and accelerated RGD would improve this by an  $\bigO( \sqrt{{\lambda_1}/{\delta}})$ factor. 
However, Garber et al.~\cite{garber2017communication} point out that, when $\delta$ is small, e.g. $\delta = \Theta( 1 / \sqrt{Kn})$, then unfortunately the number of communication rounds would increase with the sample size, which renders these algorithms non-scalable. 

Standard distributed convex approaches, e.g.~\cite{jaggi2014communication, shamir2014communication}, do not directly extend to our setting due to non-convexity and the unit-norm constraint. 
Garber et al.~\cite{garber2017communication} proposed a variant of the Power Method called \emph{Distributed Shift-and-Invert (DSI)} which converges in roughly $\bigO\left( \sqrt{\frac{b}{\delta \sqrt n }} \log^2 (1 / \varepsilon) \log (1 / \delta) \right)$ rounds, where $b$ is a bound on the squared $\ell_2$-norm of the data.  
Huang and Pan~\cite{huang2020communication} aimed to improve the dependency of the algorithm on $\varepsilon$ and $\delta$, and proposed an algorithm called \emph{Communication-Efficient Distributed Riemannian Eigensolver (CEDRE)}. 
This algorithm is shown to have round complexity $\bigO( \frac{b}{\delta \sqrt n} \log(1 / \varepsilon))$, which does not scale with the sample size for $\delta = \Omega( 1 / \sqrt{Kn})$, and has only logarithmic dependency on the accuracy $\varepsilon$.

\paragraph{Technical issues in~\cite{huang2020communication}.}  Huang and Pan~\cite{huang2020communication} proposed an interesting approach, which could provide the most round-efficient distributed algorithm to date. Despite the fact that we find the main idea of this paper very creative, we have unfortunately identified a significant gap in their analysis, which we now outline. 

Specifically, one of their main results, Theorem 3, uses the local gradient dominance shown in \cite{zhang2016riemannian}; yet, the application of this result is invalid, as it is done without knowing in advance that the iterates of the algorithms continue to remain in the ball of initialization. This is compounded by another error on the constant of gradient dominance (Lemma 2), which we believe is caused by a typo in the last part of the proof of Theorem 4 in~\cite{zhang2016riemannian}. This typo   is unfortunately propagated into their proof. More precisely, the objective is indeed gradient dominated, but with a constant which vanishes when we approach the equator, in contrast with the $2/\delta$ which is claimed \emph{globally}. (We reprove this result independently in Proposition \ref{prop:PL}). Thus, starting from a point lying in some ball of the minimizer can lead to a new point where the objective is more poorly gradient dominated. 

This is a non-trivial technical problem which, in the case of gradient descent, can be addressed by a choice of the learning rate depending on the initialization. Given these issues, we perform a new and formally-correct analysis for gradient descent based on new convexity properties which  guarantee convergence directly in terms of iterates, and not just function values. We would like however to note that our  focus in this paper is on bit and not  round complexity.

\section{Step 1: Computing the Leading Eigenvector in One Node}

\subsection{Convexity-like Properties and Smoothness}

We first analyze the case that all the matrices $A_i$ are simultaneously known to all machines. This practically means that we can run gradient descent in one of the machines (let it be the master), since we have all the data stored there. All the proofs for this section are in Appendix \ref{app:conv_properties}.

Our problem reads
\begin{equation*}
    \min_{x \in \mathbb{S}^{d-1}} -x^T A x 
\end{equation*}
where $A=M^T M$ is the global covariance matrix. If $\delta = \lambda_1 - \lambda_2 >0$, this problem has exactly two global minima: $v_1$ and $-v_1$. Let $x \in \mathbb{S}^{d-1}$ be an arbitrary point. Then $x$ can be written in the form $x=\sum_{i=1}^d \alpha_i v_i$. Fixing the minimizer $v_1$, we have that a ball in $\mathbb{S}^{d-1}$ around $v_1$ is of the form 
\begin{equation}\label{eq:def_Ba}
    B_a=\lbrace x \in \mathbb{S}^{d-1} \ | \  \alpha_1 \geq a \rbrace = \lbrace x \in \mathbb{S}^{d-1} \ | \  \langle x, v_1 \rangle \geq a \rbrace
\end{equation}
for some $a$. Without loss of generality, we may assume $a>0$ (otherwise, consider the ball around $-v_1$ to establish convergence to $-v_1$). In this case, the region $\mathcal{A}$ as defined in Def.~\ref{def:geodesically uniquely convex subset} is identical to $B_a$ and $x^*:=v_1$.

We begin by investigating the convexity properties of the function $-x^T A x$. In particular, we prove that this function is weakly-quasi-convex (see Def.~\ref{def:weakly_quasi_convex}) with constant $2a$ in the ball $B_a$. See also Appendix \ref{app:sphere} for a definition of the Riemannian gradient $\textnormal{grad}f$ and the logarithm $\log$.

\begin{restatable}{Proposition}{weakconvex}
\label{prop:weak_convex}
The function $f(x)=-x^T A x$ satisfies 
\begin{equation*}
   2 a (f(x)-f^*) \leq \langle \textnormal{grad}f(x),-\log_x(x^*) \rangle
\end{equation*}
for any $x \in B_a$ with $a>0$.
\end{restatable}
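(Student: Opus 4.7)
The plan is to unfold both sides of the inequality in the eigenbasis of $A$ and then reduce the inequality to the two elementary estimates $\theta/\sin\theta \geq 1$ and $\alpha_1 \geq a$.

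First I would write out the objects in closed form on the sphere. The Euclidean gradient of $f(x)=-x^TAx$ is $-2Ax$, so after projecting onto $T_x\mathbb{S}^{d-1}$ we get
\begin{equation*}
    \textnormal{grad}f(x) = -2\bigl(Ax - (x^TAx)\,x\bigr).
\end{equation*}
For the logarithm on the sphere I would use the standard formula
\begin{equation*}
    \log_x(x^*) = \frac{\theta}{\sin\theta}\bigl(x^* - (\cos\theta)\,x\bigr),\qquad \cos\theta = \langle x,x^*\rangle = \alpha_1,
\end{equation*}
so that $-\log_x(x^*) = \frac{\theta}{\sin\theta}\bigl((\cos\theta)\,x - x^*\bigr)$.

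Next I would compute the inner product. Since $\textnormal{grad}f(x)$ is tangent, $\langle \textnormal{grad}f(x),x\rangle = 0$, so the $(\cos\theta)\,x$ term drops out and
\begin{equation*}
    \langle \textnormal{grad}f(x), -\log_x(x^*)\rangle = \frac{2\theta}{\sin\theta}\bigl(v_1^T A x - (x^TAx)\,\alpha_1\bigr).
\end{equation*}
Writing $x=\sum_{i=1}^d \alpha_i v_i$, I would evaluate $v_1^T A x = \lambda_1\alpha_1$ and $x^TAx = \sum_i \alpha_i^2\lambda_i$, and use $\sum_i \alpha_i^2 = 1$ to rewrite
\begin{equation*}
    v_1^T A x - (x^TAx)\,\alpha_1 = \alpha_1\sum_{i\geq 2}\alpha_i^2(\lambda_1-\lambda_i).
\end{equation*}
On the other hand $f(x)-f^* = \lambda_1 - \sum_i \alpha_i^2\lambda_i = \sum_{i\geq 2}\alpha_i^2(\lambda_1-\lambda_i)$, so the two expressions coincide up to the factor $\alpha_1$, giving the clean identity
\begin{equation*}
    \langle \textnormal{grad}f(x), -\log_x(x^*)\rangle = \frac{2\theta\,\alpha_1}{\sin\theta}\,(f(x)-f^*).
\end{equation*}

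Finally, I would conclude by the two elementary inequalities $\theta \geq \sin\theta$ for $\theta\in[0,\pi]$ and $\alpha_1 = \cos\theta \geq a$ (the definition of $B_a$), which together give $\frac{\theta\,\alpha_1}{\sin\theta}\geq a$ and hence the desired bound. The only subtlety worth flagging is handling $\theta=0$, where the prefactor extends continuously to $\alpha_1=1$ and both sides of the inequality vanish; apart from that, the proof is a direct calculation and I do not anticipate any real obstacle.
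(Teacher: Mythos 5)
Your proof is correct and takes essentially the same route as the paper's: the paper also expands $x$ in the eigenbasis, computes $\langle P_x\nabla f(x),\,x-x^*\rangle = 2\alpha_1(f(x)-f^*)$, and bounds the prefactor $\textnormal{dist}(x,x^*)/\|P_x(x-x^*)\|$ (which is exactly your $\theta/\sin\theta$) below by $1$. Your version just makes the exact identity with the $\theta/\sin\theta$ factor explicit, which is a presentational difference, not a different argument.
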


We continue by providing a quadratic growth condition for our cost function, which can also be found in \cite[Lemma 2]{xu2018convergence} in a slightly different form. Here $\textnormal{dist}$ is the intrinsic distance in the sphere, that we also define in Appendix \ref{app:sphere}.

\begin{restatable}{Proposition}{quadraticgrowth}
\label{prop:quadratic_growth}
The function $f(x)=-x^T A x$ satisfies
\begin{equation*}
f(x)-f^* \geq \frac{\delta}{4} \textnormal{dist}^2(x,x^*)
\end{equation*}
for any $x \in B_a$ with $a>0$.
\end{restatable}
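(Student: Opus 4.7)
The plan is to diagonalize the quadratic form in the eigenbasis of $A$, reduce the geodesic distance on the sphere to the angle between $x$ and $v_1$, and close the argument with an elementary trigonometric inequality.

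First I would expand $x \in \mathbb{S}^{d-1}$ in the orthonormal eigenbasis $\{v_1,\dots,v_d\}$ of $A$, writing $x = \sum_{i=1}^d \alpha_i v_i$ with $\sum_i \alpha_i^2 = 1$, so that $x^T A x = \sum_{i} \lambda_i \alpha_i^2$ and $f^* = -\lambda_1$. A direct calculation then gives
\begin{equation*}
f(x) - f^* \;=\; \lambda_1 - \sum_{i=1}^d \lambda_i \alpha_i^2 \;=\; \sum_{i=2}^d (\lambda_1 - \lambda_i)\,\alpha_i^2 \;\geq\; (\lambda_1 - \lambda_2)\sum_{i=2}^d \alpha_i^2 \;=\; \delta\,(1-\alpha_1^2),
\end{equation*}
using that $\lambda_1 - \lambda_i \geq \lambda_1 - \lambda_2 = \delta$ for all $i \geq 2$ and that $\sum_{i \geq 2}\alpha_i^2 = 1-\alpha_1^2$.

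Next I would translate this into the intrinsic distance. Because the sphere is endowed with the arc-length metric, the geodesic distance from $x$ to $x^* = v_1$ is $\theta := \textnormal{dist}(x,x^*) = \arccos\langle x, v_1\rangle = \arccos(\alpha_1)$. Hence $\alpha_1 = \cos\theta$ and $1 - \alpha_1^2 = \sin^2\theta$, so the previous bound becomes $f(x) - f^* \geq \delta \sin^2\theta$.

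The final step is to dominate $\sin^2\theta$ by $\theta^2/4$. The assumption $x \in B_a$ with $a > 0$ means $\alpha_1 \geq a > 0$, which forces $\theta \in [0,\arccos(a)] \subset [0,\pi/2)$. On this range Jordan's inequality yields $\sin\theta \geq (2/\pi)\theta$, and since $(2/\pi)^2 = 4/\pi^2 > 1/4$, we obtain $\sin^2\theta \geq \theta^2/4$. Combining gives $f(x)-f^* \geq (\delta/4)\,\textnormal{dist}^2(x,x^*)$, as required.

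I do not expect any serious obstacle here; the one subtlety worth flagging is the role of the hypothesis $a > 0$. It is used precisely to confine $\theta$ strictly below $\pi/2$, where $\sin^2\theta$ dominates a positive multiple of $\theta^2$. Without this restriction (e.g.\ near the equator) $\theta$ could approach $\pi/2$, and although the chosen constant $1/4$ still holds at $\theta = \pi/2$, beyond the hemisphere the quadratic lower bound would eventually fail, matching the fact that the ball $B_a$ is the natural region in which the quadratic growth characterization of $v_1$ is meaningful.
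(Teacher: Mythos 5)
Your proposal is correct and follows essentially the same route as the paper's proof: expand $x$ in the eigenbasis of $A$ to obtain $f(x)-f^* \geq \delta(1-\alpha_1^2) = \delta\sin^2(\textnormal{dist}(x,x^*))$, note that $a>0$ confines the distance to $[0,\pi/2]$, and then lower-bound $\sin$ linearly (the paper uses $\sin\phi \geq \phi/2$ directly, which your Jordan-inequality bound $\sin\phi \geq (2/\pi)\phi$ implies). The only cosmetic difference is that you write the eigengap bound as $\sum_{i\geq 2}(\lambda_1-\lambda_i)\alpha_i^2 \geq \delta(1-\alpha_1^2)$ rather than bounding $x^TAx \leq \lambda_1\alpha_1^2+\lambda_2(1-\alpha_1^2)$, which is the same computation.
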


According to Def.~\ref{def:quadratic_growth}, this means that $f$ satisfies quadratic growth with constant $\mu=\delta / 2$.

Next, we prove that quadratic growth and weak-quasi-convexity imply a ``strong-weak-convexity'' property. This is similar to \cite[Proposition 3.1]{bu2020note}.

\begin{restatable}{Proposition}{weakstrongconvex}
\label{prop:weak_strong_convex}
A function $f\colon\mathbb{S}^{d-1} \rightarrow \mathbb{R}$ which satisfies quadratic growth with constant $\mu>0$ and is $2 a$-weakly-quasi-convex with respect to a minimizer $x^*$ in a ball of this minimizer, satisfies for any $x$ in the same ball the inequality
\begin{equation*}
   f(x)-f^* \leq \frac{1}{a} \langle \textnormal{grad}f(x), -\log_x(x^*) \rangle - \frac{\mu}{2} \textnormal{dist}^2 (x,x^*).
\end{equation*}
\end{restatable}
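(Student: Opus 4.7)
The plan is to derive the claimed inequality by a straightforward algebraic combination of the two hypotheses, with no geometric arguments needed beyond what the assumptions already give.

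First, I would rewrite the $2a$-weak-quasi-convexity hypothesis in a form that isolates $f(x) - f^*$. Starting from
\begin{equation*}
    2a(f(x) - f^*) \leq \langle \textnormal{grad} f(x), -\log_x(x^*) \rangle,
\end{equation*}
and dividing by $a > 0$, this yields
\begin{equation*}
    2(f(x) - f^*) \leq \frac{1}{a} \langle \textnormal{grad} f(x), -\log_x(x^*) \rangle.
\end{equation*}

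Next, I would split the factor of $2$ on the left-hand side as $2 = 1 + 1$ and move one copy of $f(x) - f^*$ to the right:
\begin{equation*}
    f(x) - f^* \leq \frac{1}{a} \langle \textnormal{grad} f(x), -\log_x(x^*) \rangle - (f(x) - f^*).
\end{equation*}
At this point, I would invoke the quadratic growth hypothesis, which gives $f(x) - f^* \geq \tfrac{\mu}{2} \textnormal{dist}^2(x, x^*)$, equivalently $-(f(x) - f^*) \leq -\tfrac{\mu}{2} \textnormal{dist}^2(x, x^*)$. Substituting this bound into the previous line yields exactly the desired inequality.

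There is no real obstacle here; the statement is essentially a bookkeeping identity combining the two inequalities. The only conceptual point is noticing that weak-quasi-convexity with constant $2a$ gives a factor of $2$ on the left when divided through by $a$, which is precisely what allows one copy of $f(x) - f^*$ to be absorbed and the other to be replaced by the quadratic-growth lower bound. The argument is the Riemannian analogue of the Euclidean result of \cite[Proposition 3.1]{bu2020note}, and since the only geometric objects entering are $\textnormal{grad} f$, $\log_x$, and $\textnormal{dist}$ (all already used in the assumptions), the manifold structure plays no additional role in this particular step.
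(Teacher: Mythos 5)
Your proof is correct and is essentially the same argument as the paper's: both simply combine the two hypotheses algebraically, the paper by adding and subtracting $\frac{\mu}{2}\textnormal{dist}^2(x,x^*)$ and bounding the added term through the chain $\frac{\mu}{2}\textnormal{dist}^2(x,x^*) \leq f(x)-f^* \leq \frac{1}{2a}\langle \textnormal{grad}f(x), -\log_x(x^*)\rangle$, you by splitting $2(f(x)-f^*)$ and lower-bounding one copy via quadratic growth. The two rearrangements are trivially equivalent, so nothing further is needed.
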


Interestingly, using Proposition~\ref{prop:weak_strong_convex}, we can recover the gradient dominance property proved in \cite[Theorem 4]{zhang2016riemannian}.
\begin{restatable}{Proposition}{PL}
\label{prop:PL}
The function $f(x) = -x^T A x$ satisfies
\begin{equation*}
    \| \textnormal{grad}f(x) \|^2 \geq \delta a^2 (f(x)-f^*)
\end{equation*}
for any $x \in B_a$ with $a>0$.
\end{restatable}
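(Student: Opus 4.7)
The plan is to chain together Proposition \ref{prop:quadratic_growth} and Proposition \ref{prop:weak_strong_convex}, then apply Cauchy--Schwarz to the inner product term and optimize over the distance to the minimizer. First, I would recall that on the sphere the Riemannian logarithm satisfies $\|\log_x(x^*)\| = \textnormal{dist}(x,x^*)$, and that by Proposition \ref{prop:quadratic_growth} the quadratic growth constant is $\mu = \delta/2$.

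Next I would instantiate Proposition \ref{prop:weak_strong_convex} with $\mu = \delta/2$ to obtain, for all $x \in B_a$,
\begin{equation*}
    f(x) - f^* \leq \frac{1}{a}\,\langle \textnormal{grad}\,f(x),\,-\log_x(x^*)\rangle - \frac{\delta}{4}\,\textnormal{dist}^2(x,x^*).
\end{equation*}
Applying Cauchy--Schwarz in the tangent space at $x$ gives
\begin{equation*}
    \langle \textnormal{grad}\,f(x),\,-\log_x(x^*)\rangle \leq \|\textnormal{grad}\,f(x)\|\cdot \textnormal{dist}(x,x^*),
\end{equation*}
so, writing $g := \|\textnormal{grad}\,f(x)\|$ and $r := \textnormal{dist}(x,x^*)$, we get $f(x)-f^* \leq \frac{g r}{a} - \frac{\delta}{4} r^2$.

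Finally I would treat the right-hand side as a concave quadratic in $r$ and take its unconstrained maximum. The vertex is at $r = \frac{2g}{a\delta}$, where the value equals $\frac{g^2}{a^2\delta}$. Hence
\begin{equation*}
    f(x) - f^* \leq \frac{\|\textnormal{grad}\,f(x)\|^2}{a^2\delta},
\end{equation*}
which on rearrangement is exactly the claim $\|\textnormal{grad}\,f(x)\|^2 \geq \delta a^2 (f(x)-f^*)$. The only mild subtlety is justifying that the unconstrained maximizer in $r$ is an admissible upper bound even though $r = \textnormal{dist}(x,x^*)$ is a fixed quantity; this is fine because the inequality $f(x)-f^* \leq \frac{gr}{a} - \frac{\delta}{4}r^2$ implies $f(x)-f^* \leq \max_{t\geq 0}\bigl(\frac{gt}{a} - \frac{\delta}{4}t^2\bigr)$ by choosing $t=r$. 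Thus no genuine obstacle arises beyond plugging the right constants into already-established inequalities.
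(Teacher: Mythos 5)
Your proposal is correct and follows essentially the same route as the paper: both start from Proposition \ref{prop:weak_strong_convex} with $\mu=\delta/2$ and eliminate the distance term, the paper via Young's inequality $\langle u,v\rangle \leq \frac{\rho}{2}\|u\|^2+\frac{1}{2\rho}\|v\|^2$ with the optimized weight $\rho=\frac{2}{a\delta}$, you via Cauchy--Schwarz followed by maximizing the concave quadratic in $\textnormal{dist}(x,x^*)$, which is the same computation. Your closing remark correctly handles the only subtlety (replacing the fixed distance by the maximizer only strengthens the upper bound), so the argument is complete.
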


The proof of Proposition \ref{prop:PL} can be done independently based only on Proposition \ref{prop:weak_strong_convex} and \cite[Lemma 3.2]{bu2020note}. This verifies that our results until now are optimal regarding the quality of initialization since gradient dominance proved in~\cite{zhang2016riemannian} is optimal in that sense (i.e. the inequalities in the proof are tight).

\begin{restatable}{Proposition}{smoothness}
\label{prop:smoothness}
The function $f(x)=-x^T A x$ is geodesically $2\lambda_1$-smooth on the sphere.
\end{restatable}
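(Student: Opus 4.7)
The plan is to establish $2\lambda_1$-smoothness by bounding the operator norm of the Riemannian Hessian of $f$ on $\mathbb{S}^{d-1}$. By a standard equivalence, geodesic $L$-smoothness of $f$ follows once we show $\|\mathrm{Hess}\,f(x)\|_{\mathrm{op}} \le L$ uniformly for $x \in \mathbb{S}^{d-1}$, because the sphere has bounded sectional curvature and $f$ is $C^\infty$, so we can integrate the Hessian bound along a minimizing geodesic between any two points.

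First I would compute the Riemannian Hessian. Since the sphere is embedded in $\mathbb{R}^d$ with the induced metric, for any smooth extension of $f$ to a neighborhood of $\mathbb{S}^{d-1}$ and any tangent vector $v \in T_x\mathbb{S}^{d-1}$ (so $v \perp x$), the Hessian reads
\begin{equation*}
\mathrm{Hess}\,f(x)[v,v] \;=\; v^T \nabla^2 f(x)\,v \;-\; \langle \nabla f(x), x\rangle\, \|v\|^2,
\end{equation*}
which is the Euclidean Hessian quadratic form corrected by the normal component of the Euclidean gradient against the Weingarten map of the sphere (here equal to $-x$). Plugging in $\nabla f(x) = -2Ax$ and $\nabla^2 f(x) = -2A$ gives
\begin{equation*}
\mathrm{Hess}\,f(x)[v,v] \;=\; -2\,v^T A v \;+\; 2\,(x^T A x)\,\|v\|^2.
\end{equation*}

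Next I would bound this quadratic form. Using $\|x\| = 1$, the Rayleigh quotient satisfies $0 \le \lambda_d \le x^T A x \le \lambda_1$ (since $A = M^T M$ is positive semidefinite), and similarly $\lambda_d\|v\|^2 \le v^T A v \le \lambda_1\|v\|^2$. Combining these gives the two-sided bound
\begin{equation*}
-2\lambda_1\|v\|^2 \;\le\; 2(\lambda_d - \lambda_1)\|v\|^2 \;\le\; \mathrm{Hess}\,f(x)[v,v] \;\le\; 2(\lambda_1 - \lambda_d)\|v\|^2 \;\le\; 2\lambda_1\|v\|^2,
\end{equation*}
so $\|\mathrm{Hess}\,f(x)\|_{\mathrm{op}} \le 2\lambda_1$ for every $x \in \mathbb{S}^{d-1}$.

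Finally, I would translate this pointwise Hessian bound into geodesic smoothness. Let $\gamma\colon [0,1]\to \mathbb{S}^{d-1}$ be the minimizing geodesic from $x$ to $y$ with $\dot\gamma(0) = \log_x(y)$, and consider $\varphi(t) := f(\gamma(t))$. Then $\varphi'(t) = \langle \mathrm{grad}\,f(\gamma(t)), \dot\gamma(t)\rangle$ and $\varphi''(t) = \mathrm{Hess}\,f(\gamma(t))[\dot\gamma(t),\dot\gamma(t)]$, because $\dot\gamma$ is parallel along $\gamma$. The Hessian bound yields $|\varphi''(t)| \le 2\lambda_1 \|\dot\gamma(t)\|^2 = 2\lambda_1\,\mathrm{dist}^2(x,y)$, and integrating twice produces the standard quadratic upper bound
\begin{equation*}
f(y) \;\le\; f(x) + \langle \mathrm{grad}\,f(x), \log_x(y)\rangle + \lambda_1\,\mathrm{dist}^2(x,y),
\end{equation*}
which is precisely geodesic $2\lambda_1$-smoothness in the sense used elsewhere in the paper. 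The only mildly delicate step is the identification of the Riemannian Hessian via the embedding formula; once that is in place, the remainder is just eigenvalue bounds and a one-dimensional Taylor argument along geodesics.
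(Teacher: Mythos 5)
Your proposal is correct and follows essentially the same route as the paper: compute the Riemannian Hessian of $f$ via the embedded (projection/Weingarten) formula, obtaining $-2v^TAv + 2(x^TAx)\|v\|^2$ on unit tangent vectors, bound it by the eigenvalues of $A$, and conclude smoothness from the fact that the Hessian is the covariant derivative of the gradient. If anything, your version is slightly more complete, since you record the two-sided bound $|\mathrm{Hess}\,f(x)[v,v]|\le 2\lambda_1\|v\|^2$, which is what the Lipschitz-gradient form of Def.~\ref{def:smoothness} actually requires, whereas the paper only states the upper bound explicitly.
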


Thus, the smoothness constant of $f$, as defined in Def.~\ref{def:smoothness}, equals $\gamma = 2\lambda_1$. Similarly, let $\gamma_i$ denote the smoothness constant of $f_i$, which equals twice the largest eigenvalue of $A_i$. In order to estimate $\gamma$ in a distributed fashion using only the local data matrices $A_i$, we shall use the over-approximation $\gamma \leq n \max_{i=1,...,n} \gamma_i$.

\subsection{Convergence}\label{sec:RGD}
We now consider Riemannian gradient descent with learning rate $\eta>0$ starting from a point $x^{(0)} \in B_{a}$:
\begin{equation*}
       x^{(t+1)}=\exp_{x^{(t)}}(-\eta \, \textnormal{grad} f(x^{(t)})),
   \end{equation*} 
where $\exp$ is the exponential map of the sphere, defined in Appendix \ref{app:sphere}.

Using Proposition \ref{prop:weak_strong_convex} and a proper choice of $\eta$, we can establish a convergence rate for the instrinsic distance of the iterates to the minimizer.


\begin{restatable}{Proposition}{GDconvergence}
\label{prop:GDconvergence}
An iterate of Riemannian gradient descent for $f(x) = -x^T A x$ starting from a point $x^{(t)} \in B_a$ and with step-size $\eta \leq a / \gamma$ where $\gamma \geq 2\lambda_1$, produces a point $x^{(t+1)}$
   that satisfies
   \begin{equation*}
       \textnormal{dist}^2(x^{(t+1)},x^*) \leq \left(1-a \mu \eta \right) \textnormal{dist}^2(x^{(t)},x^*) \qquad \text{for $\mu = \delta/2$}.
   \end{equation*}
\end{restatable}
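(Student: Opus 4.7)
The plan is a standard ``distance-to-minimizer'' argument, but carried out in the curved setting of the sphere. The starting point is a positive-curvature cosine comparison for geodesic triangles (a Toponogov-type inequality), which is the spherical analog of the Euclidean identity $\|x+v-y\|^2 = \|x-y\|^2 + \|v\|^2 - 2\langle v, y-x\rangle$. On $\mathbb{S}^{d-1}$, for $x,y$ in a common hemisphere and any tangent vector $v$ at $x$, one has
\begin{equation*}
\textnormal{dist}^2(\exp_x(v),y) \;\le\; \textnormal{dist}^2(x,y) + \|v\|^2 - 2\langle v, \log_x(y)\rangle.
\end{equation*}
I would invoke this inequality (which should be recorded in Appendix~\ref{app:sphere}) with $x = x^{(t)}$, $y = x^\star$ and $v = -\eta\,\textnormal{grad}f(x^{(t)})$, noting that both $x^{(t)}$ and $x^\star$ lie in $B_a$ for $a>0$, hence in a common open hemisphere, so the comparison applies. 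This yields
\begin{equation*}
\textnormal{dist}^2(x^{(t+1)},x^\star) \le \textnormal{dist}^2(x^{(t)},x^\star) + \eta^2\|\textnormal{grad}f(x^{(t)})\|^2 - 2\eta \langle \textnormal{grad}f(x^{(t)}), -\log_{x^{(t)}}(x^\star)\rangle.
\end{equation*}

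Next I would plug in Proposition~\ref{prop:weak_strong_convex}, rewritten in the form $\langle \textnormal{grad}f(x), -\log_x(x^\star)\rangle \ge a(f(x)-f^\star) + \tfrac{a\mu}{2}\textnormal{dist}^2(x,x^\star)$. Substituting this lower bound into the inner-product term above extracts the desired contraction factor and leaves a residual:
\begin{equation*}
\textnormal{dist}^2(x^{(t+1)},x^\star) \le (1-a\mu\eta)\,\textnormal{dist}^2(x^{(t)},x^\star) + \eta^2\|\textnormal{grad}f(x^{(t)})\|^2 - 2a\eta\bigl(f(x^{(t)})-f^\star\bigr).
\end{equation*}

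The final step is to kill the residual using the smoothness statement of Proposition~\ref{prop:smoothness}. The standard consequence of $\gamma$-smoothness on the manifold (obtained by plugging $y = \exp_x(-\tfrac{1}{\gamma}\textnormal{grad}f(x))$ into the smoothness upper bound and using $f(y)\ge f^\star$) is the global inequality $\|\textnormal{grad}f(x)\|^2 \le 2\gamma(f(x)-f^\star)$. Hence $\eta^2\|\textnormal{grad}f(x^{(t)})\|^2 \le 2\gamma\eta^2(f(x^{(t)})-f^\star) \le 2a\eta(f(x^{(t)})-f^\star)$ as soon as $\eta \le a/\gamma$, so the residual is non-positive and the claimed contraction follows.

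The main obstacle I expect is the geometric step: justifying the positive-curvature distance comparison cleanly. If the appendix only records the non-positive-curvature bound used in~\cite{zhang2016first, zhang2016riemannian}, one would have to supply a short sphere-specific argument (e.g., via the spherical law of cosines applied to the triangle with vertices $x^{(t)}, x^{(t+1)}, x^\star$, all lying within an open hemisphere since $a>0$ and $\eta$ is small enough that the step does not leave the hemisphere). Everything else is algebraic: Proposition~\ref{prop:weak_strong_convex} is exactly tailored to produce the $(1-a\mu\eta)$ factor, and the step-size restriction $\eta\le a/\gamma$ is precisely what the smoothness bound demands to cancel the remaining positive term.
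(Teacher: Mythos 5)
Your proposal is correct and follows essentially the same route as the paper: the distance comparison you invoke is exactly Proposition~\ref{prop:tangent_space} (already recorded in Appendix~\ref{app:sphere}), and the paper likewise combines Proposition~\ref{prop:weak_strong_convex} with the smoothness consequence $\|\textnormal{grad}f(x)\|^2 \leq 2\gamma\,(f(x)-f^*)$ and the condition $\eta \leq a/\gamma$ to cancel the gradient-norm term. The only difference is cosmetic ordering of the substitutions, so no changes are needed.
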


Note that this result implies directly that if our initialization $x^{(0)}$ lies in the ball $B_a$, the distance of $x^{(1)}$ to the center $x^*$ decreases and thus all subsequent iterates continue being in the initialization ball. This is essential since it guarantees that the convexity-like properties for $f$ continue to hold during the whole optimization process. The proof of Proposition \ref{prop:GDconvergence} is in Appendix \ref{app:conv_properties}.

This result is also valid in a more general setting when applied to an arbitrary $f\colon \mathbb{S}^{d-1} \to \mathbb{R}$ that is $2a$-weakly-quasi convex and $\gamma$-smooth, and that satisfies quadratic growth with constant $\mu$. The Euclidean analogue of this result can be found in \cite[Lemma 4.4]{bu2020note}. 

\section{Step 2: Computing the Leading Eigenvector in Several Nodes}
We now present our version of distributed gradient descent for leading eigenvector computation and measure its bit complexity until reaching accuracy $\epsilon$ in terms of the intrinsic distance of an iterate $x^{(T)}$ from the minimizer $x^*$ ($x^*$ is the leading eigenvector closest to the initialization point).

\paragraph{Lattice Quantization.}
For estimating the Riemannian gradient in a distributed manner with limited communication, we use a quantization procedure developed in \cite{davies2020distributed}. The original quantization scheme involves randomness, but we use a \textit{deterministic} version of it, by picking up the closest point to the vector that we want to encode. This is similar to the quantization scheme used by \cite{alistarh2020improved} and has the following properties.
\begin{Proposition} \cite{davies2020distributed,alistarh2020improved}
\label{lattice_quantization}
Denoting by $b$ the number of bits that each machine uses to communicate, there exists a quantization function
\begin{equation*}
    Q\colon \mathbb{R}^{d-1} \times \mathbb{R}^{d-1}\times 	\mathbb{R_+} \times \mathbb{R_+} \rightarrow \mathbb{R}^{d-1},
\end{equation*}
which, for each $w,y>0$, 
consists of an encoding function
$\textnormal{enc}_{w,y}:\mathbb{R}^{d-1} \rightarrow \lbrace 0,1 \rbrace ^b$ and a decoding one $\textnormal{dec}_{w,y}:\lbrace 0,1 \rbrace ^b \times \mathbb{R}^{d-1} \rightarrow \mathbb{R}^{d-1}$, such that, for all $x, x' \in \mathbb{R}^{d-1}$,
\begin{itemize}
    \item $\textnormal{dec}_{w,y} (\textnormal{enc}_{w,y}(x),x') = Q(x,x', y ,w)$, if $\|x-x'\| \leq y$.
    \item $\|Q(x,x',y,w)-x\| \leq w$, if $\|x-x'\| \leq y$.
    \item If $y/w>1$, the cost of the quantization procedure in number of bits satisfies
    \newline
    $b= \bigO((d-1) \textnormal{log}_2 \left(\frac{y}{w})\right)=\bigO(d \textnormal{log} \left(\frac{y}{w})\right)$.
\end{itemize}
\end{Proposition}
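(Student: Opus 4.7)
The plan is to realize $Q$ by a standard lattice-quantization construction in $\mathbb{R}^{d-1}$ that exploits the fact that the reference vector $x'$ is common knowledge between encoder and decoder. Since $\|x - x'\| \leq y$, only the offset $x - x'$, which lives in a Euclidean ball of radius $y$, must be transmitted, and we can afford precision $w$ on that offset. Fix a lattice $\Lambda \subset \mathbb{R}^{d-1}$ whose covering radius is $w$ (any constant-factor inflation gets absorbed into the $\mathcal{O}$; in dimensions where no optimal covering is known, a rescaled integer lattice $\tfrac{w}{\sqrt{d-1}} \mathbb{Z}^{d-1}$ suffices and only affects the hidden constant and a $\tfrac12\log(d-1)$ term inside the log).

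Define $\textnormal{enc}_{w,y}(x)$ to output the index of the lattice point $q \in \Lambda$ closest to $x - x'$ within the finite codebook $\Lambda \cap \overline{B}(0, y + w)$, and define $\textnormal{dec}_{w,y}(\cdot, x')$ to return $q + x'$. With this convention, the first bullet (consistency of encoding/decoding given the shared $x'$) holds by construction. For the second bullet, the covering-radius property of $\Lambda$ gives $\|q - (x - x')\| \leq w$ whenever $x - x'$ lies in the region that $\Lambda$ covers, hence
\begin{equation*}
\|Q(x, x', y, w) - x\| \;=\; \|(q + x') - x\| \;=\; \|q - (x - x')\| \;\leq\; w.
\end{equation*}

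For the third bullet I would carry out a volume-packing count of the codebook. Any lattice point that can arise from some $x - x'$ with $\|x - x'\| \leq y$ lies in $\overline{B}(0, y + w)$. Because $\Lambda$ has covering radius $w$, the Voronoi cells have volume $\Theta(w^{d-1})$, and disjoint translates of (a shrunk copy of) the Voronoi cell centered at each lattice point in $\overline{B}(0, y + w)$ all fit inside $\overline{B}(0, y + 2w)$; a volume comparison then gives
\begin{equation*}
|\Lambda \cap \overline{B}(0, y + w)| \;=\; \mathcal{O}\!\left( \left(\tfrac{y + w}{w}\right)^{d-1} \right) \;=\; \mathcal{O}\!\left( \left(\tfrac{y}{w}\right)^{d-1} \right),
\end{equation*}
where the last equality uses the hypothesis $y/w > 1$. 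Addressing this codebook therefore costs $b = \mathcal{O}((d-1)\log_2(y/w))$ bits, which is exactly the claim.

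The main obstacle is the third bullet: one must be careful that the covering-radius guarantee and the volume count combine into a clean $\log(y/w)$ without picking up a hidden dimension-dependent factor outside the log. This is why I would cite the explicit lattice constructions of \cite{davies2020distributed,alistarh2020improved}, which already package the covering/packing tradeoff with the right constants; the only new observation here relative to those references is that, since we want a deterministic quantizer rather than the randomized one in \cite{davies2020distributed}, we simply replace the random rounding step by the nearest-point rule, which preserves the deterministic worst-case bound $\|Q - x\| \leq w$ (as in \cite{alistarh2020improved}) while using the same codebook and therefore the same bit count.
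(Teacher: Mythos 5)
There is a genuine gap, and it is in the very first step of your construction rather than in the volume count. The proposition's interface requires $\textnormal{enc}_{w,y}\colon\mathbb{R}^{d-1}\to\{0,1\}^b$ to be a function of $x$ \emph{alone}; only the decoder receives the side information $x'$. Your encoder, however, is defined as ``the index of the lattice point closest to $x-x'$'', which the encoding party cannot compute because it does not know $x'$. This is not a cosmetic issue: in the algorithm this proposition supports, node $i$ encodes $\textnormal{grad}f_i(x^{(t)})$ while the master decodes against \emph{its own} reference vector (the master's local gradient at $t=0$, or the previously decoded $q_i^{(t-1)}$ later), which node $i$ never sees. The entire difficulty that the lattice scheme of \cite{davies2020distributed} resolves is exactly this asymmetry: the encoder must commit to $b$ bits knowing only that \emph{some} decoder reference lies within distance $y$, not which one. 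If the encoder knew $x'$, the problem would be trivial and no lattice machinery would be needed; with the stated interface, your codebook $\Lambda\cap\overline{B}(0,y+w)$ cannot even be centered correctly.

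The standard repair, and the construction the cited works actually use, is a nested (modulo-)lattice scheme: quantize $x$ itself to the nearest point $q$ of a lattice $\Lambda$ with covering radius at most $w$, and transmit only the coset of $q$ modulo a coarse sublattice $\Lambda'\subset\Lambda$ whose packing radius exceeds roughly $y+2w$; the decoder outputs the unique point of that coset within distance $y+w$ of $x'$ (equivalently, the coset representative closest to $x'$), which coincides with $q$ precisely because $\|x-x'\|\leq y$. Determinism is obtained, as you note, by nearest-point rounding instead of randomized rounding, and your volume-comparison argument is then the right tool --- but applied to bound the index $[\Lambda:\Lambda']=\bigO\left(\left(\frac{y}{w}\right)^{d-1}\right)$, i.e.\ the number of cosets, not the number of lattice points near $x-x'$. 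With that modification the bit bound $b=\bigO\left((d-1)\log_2\frac{y}{w}\right)$ follows as you computed. (For calibration: the paper does not reprove this proposition either; it imports it from \cite{davies2020distributed,alistarh2020improved}, so the burden on a self-contained proof is exactly the modular-lattice decoding step your sketch omits.)
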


In the following, the quantization takes place in the tangent space of each iterate $T_{x^{(t)}} \mathbb{S}^{d-1}$, which is linearly isomorphic to $\mathbb{R}^{d-1}$. We denote by $Q_x$ the specification of the function $Q$ at $T_x \mathbb{S}^{d-1}$. The vector inputs of the function $Q_x$ are represented in the local coordinate system of the tangent space that the quantization takes place at each step. For decoding at $t>0$, we use information obtained in the previous step, that we need to translate to the same tangent space. We do that using parallel transport $\Gamma$, which we define in Appendix \ref{app:sphere}.

\paragraph{Algorithm}

We present now our main algorithm, which is inspired by quantized gradient descent firstly designed by  \cite{magnusson2020maintaining}, and its similar version in \cite{alistarh2020improved}.

\noindent\rule[0.5ex]{\linewidth}{1pt}
\begin{enumerate}
    \item Choose an arbitrary machine to be the master node, let it be $i_0$.
    \item Choose $x^{(0)} \in \mathbb{S}^{d-1}$ (we analyze later specific ways to do that).
    \item Consider the following parameters
    \begin{align*}
    &\sigma:=1- \cos(D) \mu \eta , \ K:=\frac{2}{\sqrt{\sigma}}, \  \theta:=\frac{\sqrt{\sigma}(1-\sqrt{\sigma})}{4}, \\
    &\sqrt{\xi}:=\theta K+\sqrt{\sigma}, \ 
    R^{(t)}=\gamma K (\sqrt{\xi})^{t} D
\end{align*}
where $D$ is an over-approximation for $\textnormal{dist}(x^{(0)},x^*)$ (the way to practically choose $D$ is discussed later).

We assume that $\cos(D) \mu \eta \leq \frac{1}{2}$, otherwise we run the algorithm with $\sigma=\frac{1}{2}$.

In $T_{x^{(0)}} \mathbb{S}^{d-1}$:
\item Compute the local Riemannian gradient $\textnormal{grad}f_i(x^{(0)})$ at $x^{(0)}$ in each node.
\item Encode $\textnormal{grad}f_i(x^{(0)})$ in each node and decode in the master node using its local information:
\begin{equation*}
    q_i^{(0)}=Q_{x^{(0)}} \left(\textnormal{grad} f_i(x^{(0)}), \textnormal{grad} f_{i_0} (x^{(0)}), 2 \gamma \pi , \frac{\theta R^{(0)}}{2n} \right).
\end{equation*}
\item Sum the decoded vectors in the master node:
\begin{equation*}
    r^{(0)}=\sum_{i=1}^n \textnormal{grad}f_i(x^{(0)}).
\end{equation*}
\item Encode the sum in the master and decode in each machine $i$ using its local information:
\begin{equation*}
    q^{(0)}= Q_{x^{(0)}} \left(r^{(0)}, \textnormal{grad} f_{i}(x^{(0)}), \frac{\theta R^{(0)}}{2} + 2 \gamma \pi , \frac{\theta R^{(0)}}{2} \right).
\end{equation*}
For $t \geq 0$:
\item Take a gradient step using the exponential map:
\begin{equation*}
    x^{(t+1)}=\exp_{x^{(t)}}(-\eta q^{(t)})
\end{equation*}
with step-size $\eta$ (the step-size is discussed later).

In $T_{x^{(t+1)}} \mathbb{S}^{d-1}$:
\item Compute the local Riemannian gradient $\textnormal{grad} f_i(x^{(t+1)})$ at $x^{(t+1)}$ in each node.
\item Encode $\textnormal{grad} f_i(x^{(t+1)})$ in each node and decode in the master node using its (parallelly transported) local information from the previous step:
\begin{equation*}
    q_i^{(t+1)}=Q_{x^{(t+1)}} \left(\textnormal{grad} f_i(x^{(t+1)}), \Gamma_{x^{(t)}}^{x^{(t+1)}} q_i^{(t)}, \frac{R^{(t+1)}}{n} , \frac{\theta R^{(t+1)}}{2n} \right).
\end{equation*}
\item Sum the decoded vectors in the master node:
\begin{equation*}
    r^{(t+1)}=\sum_{i=1}^n \textnormal{grad}f_i(x^{(t+1)}).
\end{equation*}
\item Encode the sum in the master and decode in each machine using its local information in the previous step after parallel transport:
\begin{equation*}
    q^{(t+1)}= Q_{x^{(t+1)}} \left(r^{(t+1)}, \Gamma_{x^{(t)}}^{x^{(t+1)}} q^{(t)}, \left(1+\frac{\theta}{2} \right) R^{(t+1)} , \frac{\theta R^{(t+1)}}{2} \right).
\end{equation*}

\end{enumerate}

\noindent\rule[0.5ex]{\linewidth}{1pt}

\paragraph{Convergence}
We first control the convergence of iterates simultaneously with the convergence of quantized gradients.

Note that
\begin{equation*}
    \sqrt{\xi} = \frac{1-\sqrt{\sigma}}{2}+\sqrt{\sigma}=\frac{1+\sqrt{\sigma}}{2} \leq \frac{\sqrt{2} \sqrt{1+\sigma}}{2} =  \sqrt{\frac{1+\sigma}{2}}.
\end{equation*}

\begin{restatable}{lemma}{QGD}
\label{le:QGD}
If $\eta \leq \frac{\cos(D)}{\gamma}$,
the previous quantized gradient descent algorithm produces iterates $x^{(t)}$ and quantized gradients $q^{(t)}$ that satisfy
\begin{align*}
    \textnormal{dist}^2(x^{(t)},x^*) \leq \xi^t D^2, \quad
    \| q_i^{(t)}-\textnormal{grad}f_i(x^{(t)}) \| \leq \frac{\theta R^{(t)}}{2n}, \quad
     \| q^{(t)}-\textnormal{grad}f(x^{(t)}) \| \leq \theta R^{(t)}.
\end{align*}
\end{restatable}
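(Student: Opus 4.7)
The plan is to prove all three inequalities simultaneously by induction on $t$, since the iterate bound and the two quantization error bounds feed into each other: the inductive bound on $q^{(t)}$ is needed to control $x^{(t+1)}$, while the inductive iterate bound is needed to certify the ``valid input'' hypothesis $\|x - x'\| \leq y$ of Proposition \ref{lattice_quantization} at the next step. For the base case $t=0$, the iterate bound holds by the very definition of $D$; the first call of $Q_{x^{(0)}}$ is valid because $\gamma$-smoothness of $f$ together with $\textnormal{grad}f(x^*)=0$ and the diameter $\pi$ of the sphere give $\|\textnormal{grad}f_i(x^{(0)})\| \leq \gamma\pi$ (using $\gamma_i \leq \gamma/n$), so any two per-node gradients differ by at most $2\gamma\pi$, matching the window parameter. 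Summing the $n$ per-node errors then certifies the second call and yields $\|q^{(0)} - \textnormal{grad}f(x^{(0)})\| \leq \theta R^{(0)}$.

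For the inductive step, I first derive the iterate bound. Writing $q^{(t)} = \textnormal{grad}f(x^{(t)}) + e^{(t)}$ with $\|e^{(t)}\| \leq \theta R^{(t)}$, let $\tilde x^{(t+1)} = \exp_{x^{(t)}}(-\eta\,\textnormal{grad}f(x^{(t)}))$ be the ``exact'' step. By the inductive hypothesis $\textnormal{dist}(x^{(t)}, x^*) \leq D$, equivalently $x^{(t)} \in B_{\cos(D)}$, together with $\eta \leq \cos(D)/\gamma$, Proposition \ref{prop:GDconvergence} applies with $a = \cos(D)$ and $\mu = \delta/2$ and gives $\textnormal{dist}(\tilde x^{(t+1)}, x^*) \leq \sqrt{\sigma}\,\textnormal{dist}(x^{(t)}, x^*)$. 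Nonexpansiveness of $\exp_{x^{(t)}}$ on the positively curved sphere gives $\textnormal{dist}(x^{(t+1)}, \tilde x^{(t+1)}) \leq \eta\|e^{(t)}\|$. A triangle inequality then produces $\textnormal{dist}(x^{(t+1)}, x^*) \leq \sqrt{\sigma}(\sqrt{\xi})^t D + \eta\theta\gamma K(\sqrt{\xi})^t D$, and the identity $\sqrt{\xi} = \sqrt{\sigma} + \theta K$ combined with $\eta\gamma \leq \cos(D) \leq 1$ yields the desired $(\sqrt{\xi})^{t+1} D$ bound. This also certifies $x^{(t+1)} \in B_{\cos(D)}$ so the induction can continue.

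For the two quantization bounds at step $t+1$, I verify the window hypothesis of Proposition \ref{lattice_quantization} for each call by bounding the reference-to-target distance via the triangle inequality, $\gamma_i$- (resp.\ $\gamma$-)smoothness of $f_i$ (resp.\ $f$), the isometry of $\Gamma$, and the inductive error bounds; the auxiliary estimate $\textnormal{dist}(x^{(t+1)}, x^{(t)}) \leq \eta\|q^{(t)}\| \leq \eta(\gamma(\sqrt{\xi})^t D + \theta R^{(t)})$ follows from smoothness of $f$ between $x^{(t)}$ and $x^*$ combined with the inductive error bound. The main obstacle is purely algebraic bookkeeping: the radii $R^{(t+1)}/n$ and $(1+\theta/2)R^{(t+1)}$ must simultaneously absorb a smoothness-displacement term $\gamma_i\eta\|q^{(t)}\|$ (resp.\ $\gamma\eta\|q^{(t)}\|$), the carried-over error $\theta R^{(t)}$ from $q^{(t)}$, and the fresh target error $\theta R^{(t+1)}$. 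Reducing the resulting inequalities via the identities $\theta K = (1-\sqrt{\sigma})/2$ and $K\sqrt{\xi} = (1+\sqrt{\sigma})/\sqrt{\sigma}$ (both immediate from the parameter definitions and $\cos(D)\mu\eta \leq 1/2$) shows that all constraints collapse to elementary inequalities in $\sqrt{\sigma} \in (0,1]$ such as $\sqrt{\sigma}(1-\sqrt{\sigma}) \leq 1$, at which point the tuned choice of $\sigma, K, \theta, \xi$ is seen to be exactly what is required, completing the induction.
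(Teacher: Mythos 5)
Your proposal is correct and takes essentially the same route as the paper's proof: a simultaneous induction on all three bounds, splitting the update into the exact Riemannian step (contracted via Proposition~\ref{prop:GDconvergence} with $a=\cos(D)$, $\mu=\delta/2$) plus a quantization perturbation controlled by Proposition~\ref{prop:tangent_space}, and certifying each quantization window via triangle inequalities, geodesic smoothness, and the isometry of parallel transport, with the same parameter identities $\theta K=(1-\sqrt{\sigma})/2$ and $\sqrt{\xi}=\theta K+\sqrt{\sigma}$. The only (harmless) deviation is that you bound $\textnormal{dist}(x^{(t+1)},x^{(t)})$ by the step length $\eta\|q^{(t)}\|\leq\eta(\gamma(\sqrt{\xi})^tD+\theta R^{(t)})$, whereas the paper uses the triangle inequality through $x^*$ to get $2(\sqrt{\xi})^tD$; both reduce the bookkeeping to the same elementary inequality in $\sqrt{\sigma}\in(0,1]$.
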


The proof is a Riemannian adaptation of the similar one in \cite{magnusson2020maintaining} and \cite{alistarh2020improved} and is presented in Appendix~\ref{app:convergence_QGD}. We recall that since the sphere is positively curved, it provides a landscape easier for optimization. It is quite direct to derive a general Riemannian method for manifolds of bounded curvature using more advanced geometric bounds, however this exceeds the scope of this work which focuses on leading eigenvector computation.

We now move to our main complexity result.
\begin{restatable}{theorem}{maintheorem}
\label{thm:QGD}
Let $\eta \leq \frac{\cos(D)}{\gamma}$. Then, the previous quantized gradient descent algorithm needs at most
\begin{equation*}
    b=\bigO \left(  n d \frac{1}{\cos(D) \delta  \eta } \log \left( \frac{n}{\cos(D) \delta \eta} \right)  \log \left( \frac{D}{\epsilon} \right) \right)
\end{equation*}
bits in total to estimate the leading eigenvector with an accuracy $\epsilon$ measured in intrinsic distance.
\end{restatable}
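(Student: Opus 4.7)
The plan is to combine the geometric contraction of iterate distance from Lemma \ref{le:QGD} with the per-call bit cost of Proposition \ref{lattice_quantization}, and then sum over all rounds.

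First I would count the number of rounds $T$ needed to ensure $\textnormal{dist}(x^{(T)},x^*) \leq \epsilon$. From Lemma \ref{le:QGD} we have $\textnormal{dist}^2(x^{(T)},x^*) \leq \xi^T D^2$, so it is enough to take $T = \lceil \log(D^2/\epsilon^2) / \log(1/\xi) \rceil$. Using the inequality $\sqrt{\xi} \leq \sqrt{(1+\sigma)/2}$ already derived in the excerpt, together with the standing assumption $\cos(D)\mu\eta \leq 1/2$ (so that $\sigma \geq 1/2$), one gets $1 - \xi \geq (1-\sigma)/2 = \cos(D)\mu\eta/2$, and hence $\log(1/\xi) \geq 1-\xi = \Omega(\cos(D)\delta\eta)$ via $\mu = \delta/2$. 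This gives $T = O\!\left(\log(D/\epsilon)/(\cos(D)\delta\eta)\right)$.

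Next I would estimate the bits consumed by each individual quantization call. In a generic round $t \geq 1$, step 10 uses $y/w = (R^{(t+1)}/n)/(\theta R^{(t+1)}/(2n)) = 2/\theta$, while step 12 uses $y/w = ((1+\theta/2)R^{(t+1)})/(\theta R^{(t+1)}/2) = (2+\theta)/\theta = O(1/\theta)$. In the initial round $t = 0$ the ratio inflates by at most a factor of $n$ because of the $n$ in $w = \theta R^{(0)}/(2n)$ against the global radius $y = 2\gamma\pi$, and $R^{(0)} = \gamma K D$ absorbs the remaining constants. In every case Proposition \ref{lattice_quantization} bounds the bit cost by $O(d \log(n/\theta))$ per quantization, and summing the $n$ worker-to-master encodings and the master-to-worker broadcast gives $O(nd \log(n/\theta))$ bits per round. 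The remaining ingredient is a lower bound on $\theta = \sqrt{\sigma}(1-\sqrt{\sigma})/4$: writing $1-\sqrt{\sigma} = (1-\sigma)/(1+\sqrt{\sigma})$ and using $\sqrt{\sigma} \in [1/\sqrt{2},1]$ gives $\theta = \Omega(1-\sigma) = \Omega(\cos(D)\delta\eta)$, so that $\log(n/\theta) = O(\log(n/(\cos(D)\delta\eta)))$. Multiplying the per-round cost by $T$ then yields the claimed total
\begin{equation*}
b \;=\; O\!\left(nd \cdot \frac{\log(D/\epsilon)}{\cos(D)\delta\eta} \cdot \log\!\left(\frac{n}{\cos(D)\delta\eta}\right)\right).
\end{equation*}

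The main source of care, rather than a genuine obstacle, is the bookkeeping needed to verify at every step that the hypothesis $\|x - x'\| \leq y$ of Proposition \ref{lattice_quantization} is actually satisfied by the chosen radii; this is precisely what Lemma \ref{le:QGD} guarantees through its bounds $\| q_i^{(t)} - \textnormal{grad}f_i(x^{(t)}) \| \leq \theta R^{(t)}/(2n)$ and $\| q^{(t)} - \textnormal{grad}f(x^{(t)}) \| \leq \theta R^{(t)}$, which, after parallel transport to the next tangent space and combined with the $\gamma$-smoothness of $f$ controlling how fast the gradients move between iterates, keep the inputs to the next quantization call within the prescribed radii $R^{(t+1)}/n$ and $(1+\theta/2)R^{(t+1)}$. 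Once this consistency is in place, the bit-complexity bound is obtained by multiplying the two counting arguments above.
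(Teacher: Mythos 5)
Your proposal is correct and follows essentially the same route as the paper's proof: it bounds the iteration count via the contraction of Lemma \ref{le:QGD} using $1-\xi \geq (1-\sigma)/2 = \Theta(\cos(D)\delta\eta)$, bounds each round's cost via Proposition \ref{lattice_quantization} with $1/\theta = O\!\left(1/(\cos(D)\delta\eta)\right)$, and multiplies the two. The only cosmetic difference is that the paper bounds $1/\theta \leq 12/(1-\sigma)$ directly while you pass through $1-\sqrt{\sigma} = (1-\sigma)/(1+\sqrt{\sigma})$, which is the same estimate.
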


The proof is based on the previous Lemma \ref{le:QGD} in order to count the number of steps that the algorithm needs to estimate the minimizer with accuracy $\epsilon$ and Proposition \ref{lattice_quantization} to count the quantization cost in each round. We present the proof again in Appendix \ref{app:convergence_QGD}.

\section{Step 3: Dependence on Initialization}

\subsection{Uniformly random initialization}\label{sec:random_init}

The cheapest choice to initialize quantized gradient descent is a point in the sphere chosen uniformly at random. According to Theorem 4 in \cite{zhang2016riemannian}, such a random point $x^{(0)}$ will lie,  with probability at least $1-p$, in a ball $B_a$ (see Def.~\ref{eq:def_Ba}) where
\begin{equation}\label{eq:minimal_a_with_random_point}
    a \geq c \frac{p}{\sqrt{d}} \Longleftrightarrow \textnormal{dist}(x^{(0)},x^*) \leq \arccos \left(c \frac{p}{\sqrt{d}} \right)  .
\end{equation}
Here, $c$ is a universal constant (we estimated numerically that $c$ is lower bounded by $1$, see Appendix~\ref{app:Initialization}, thus we can use $c=1$). By choosing the step-size $\eta$ as
\begin{equation*}
    \eta = \frac{c p}{\sqrt{d} \gamma}
\end{equation*}
and the parameter $D$ as
\begin{equation*}
    D= \arccos \left(c \frac{p}{\sqrt{d}} \right),  
\end{equation*}
we are guaranteed that $\eta = \frac{\cos(D)}{\gamma}$, and $\textnormal{dist}(x^{(0)},x^*) \leq D$ with probability at least $1-p$. Our analysis above therefore applies (up to probability $1-p$) and the general communication complexity result becomes
\begin{align*}
    \bigO \left(   \frac{n d}{\cos(D) \delta  \eta } \log \frac{n}{\cos(D) \delta \eta}  \log \frac{D}{\epsilon} \right) = \bigO \left( \frac{n d}{\eta \gamma \delta  \eta } \log \frac{n}{\eta \gamma \delta \eta}  \log \frac{D}{\epsilon} \right) = \bigO \left(  \frac{n d }{\eta^2 \gamma \delta } \log \frac{n}{\eta^2 \gamma \delta}  \log \frac{D}{\epsilon} \right).
\end{align*}
Substituting $\eta^2= \frac{p^2 c^2}{ d \gamma^2}$, 
the number of bits satisfies (up to probability $1-p$) the upper bound
\begin{tcolorbox}
\begin{equation*}
    b=\bigO \left(  n \frac{d^2}{p^2}  \frac{\gamma}{ \delta } \log \frac{ n  d\gamma}{p \delta}  \log \frac{D}{\epsilon} \right)= \tilde{\bigO} \left(  n \frac{d^2}{p^2}  \frac{\gamma}{ \delta } \right).
\end{equation*}
\end{tcolorbox}

\subsection{Warm Start}

A reasonable strategy to get a more accurate initialization is to perform an eigenvalue decomposition to one of the local covariance matrices, for instance $A_{i_0}$ (in the master node $i_0$), and compute its leading eigenvector, let it be $v^{i_0}$. Then we communicate this vector to all machines in order to use its quantized approximation $x^{(0)}$ as initialization. We define:
\begin{align*}
    &\Tilde x^{(0)}=Q \left(v^{i_0},v^i, \|v^{i_0}-v^i \|,\frac{\langle  v^{i_0},x^* \rangle}{2(\sqrt{2}+2)} \right) \\ 
    & x^{(0)}=\frac{\Tilde x^{(0)}}{\| \Tilde x^{(0)} \|}
\end{align*}
where $Q$ is the lattice quantization scheme in $\mathbb{R}^d$ (i.e. we quantize the leading eigenvector of the master node as a vector in $\mathbb{R}^d$ and then project back to the sphere). The input and output variance in this quantization can be bounded by constants that we can practically estimate (see Appendix \ref{app:Initialization}).

\begin{restatable}{Proposition}{initeigen}
\label{prop:init_eigen}
Assume that our data are i.i.d. and sampled from a distribution $\mathcal{D}$ bounded in $\ell_2$ norm by a constant $h$. Given that the eigengap $\delta$ and the number of machines $n$ satisfy
\begin{equation}
\label{eq:delta_bound}
    \delta \geq \Omega \left(\sqrt{n} \sqrt{\log\frac{d}{p}}\right),
\end{equation}
we have that the previous quantization costs $\bigO(nd)$ many bits and $\langle x^{(0)}, x^* \rangle$ is lower bounded by a constant with probability at least $1-p$.
\end{restatable}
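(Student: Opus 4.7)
The plan is to prove both assertions---that the quantization uses $\mathcal{O}(nd)$ bits and that $\langle x^{(0)}, x^*\rangle = \Omega(1)$---by showing that each local leading eigenvector $v^i$ is close to the population leading eigenvector $v_1(\Sigma)$ (where $\Sigma := \mathbb{E}_{z \sim \mathcal{D}}[zz^T]$), and hence close to $x^* = v_1(A)$. Everything then reduces to plugging these perturbation bounds into Proposition \ref{lattice_quantization} and into the definition of $x^{(0)}$.

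First, I would apply a matrix concentration inequality to each empirical covariance. Since the samples are i.i.d.\ with $\|z\|_2 \leq h$, each rank-one summand $zz^T$ has operator norm at most $h^2$, so matrix Bernstein yields, with probability at least $1-p/(n+1)$,
\[
\|A_i - m_i \Sigma\| \;\leq\; C\bigl(h^2 \sqrt{m_i \log(d/p)} + h^2 \log(d/p)\bigr),
\]
and the analogous inequality holds for $A = \sum_i A_i$ with $m$ in place of $m_i$. The Davis--Kahan $\sin\theta$ theorem, applied to the perturbation of $m_i\Sigma$ by $A_i - m_i\Sigma$, then gives an upper bound on $\sin\angle(v^i, v_1(\Sigma))$ proportional to $\|A_i - m_i\Sigma\|$ divided by the local eigengap $m_i(\lambda_1(\Sigma)-\lambda_2(\Sigma))$; an analogous bound holds for $\sin\angle(x^*, v_1(\Sigma))$. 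Hypothesis \eqref{eq:delta_bound} is exactly calibrated so that this ratio is bounded by a sufficiently small absolute constant, so a union bound over $i=1,\dots,n$ and the global matrix yields, with probability at least $1-p$, $|\langle v^i, v_1(\Sigma)\rangle| \geq 1 - \kappa$ and $|\langle x^*, v_1(\Sigma)\rangle| \geq 1 - \kappa$ for all $i$, with $\kappa$ a small absolute constant.

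Second, after flipping signs of the $v^i$ so that all inner products with $v_1(\Sigma)$ are positive, these bounds imply $\|v^{i_0} - v^i\| \leq c_2$ for every $i$ and $\langle v^{i_0}, x^*\rangle \geq c_1$ for absolute constants $c_1,c_2 > 0$. Consequently the quantization parameters $y = \|v^{i_0}-v^i\|$ and $w = \langle v^{i_0}, x^*\rangle/(2(\sqrt{2}+2))$ satisfy $y/w \leq C'$ for an absolute constant $C'$. Applying the third item of Proposition \ref{lattice_quantization}, read in $\mathbb{R}^d$ instead of $\mathbb{R}^{d-1}$ (which costs at most an additive constant), yields a per-machine cost of $\mathcal{O}(d \log(y/w)) = \mathcal{O}(d)$ bits, and therefore a total cost of $\mathcal{O}(nd)$ bits, as claimed. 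Finally, converting the guarantee $\|\tilde x^{(0)} - v^{i_0}\| \leq w$ into an angle bound, since $\|v^{i_0}\|=1$,
\[
\langle x^{(0)}, x^*\rangle
\;=\; \frac{\langle \tilde x^{(0)}, x^*\rangle}{\|\tilde x^{(0)}\|}
\;\geq\; \frac{\langle v^{i_0}, x^*\rangle - w}{1 + w},
\]
and substituting $w = \langle v^{i_0}, x^*\rangle/(2(\sqrt{2}+2))$ together with $\langle v^{i_0}, x^*\rangle \geq c_1$ produces an explicit positive constant lower bound.

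The main obstacle is the first step: making the matrix Bernstein and Davis--Kahan applications quantitative enough that the hypothesis \eqref{eq:delta_bound}---phrased in terms of the spectral gap $\delta$ of the \emph{global} matrix $A$---translates into a uniform constant-size bound on the angle between each local $v^i$ and $v_1(\Sigma)$. The $\sqrt{n}$ scaling in \eqref{eq:delta_bound} is the natural consequence of each local covariance seeing only an $n$-fraction of the samples, so the per-block Bernstein fluctuation is of order $\sqrt{m/n}$ while the per-block gap is of order $\delta/n$; requiring the former to be a small fraction of the latter gives exactly the stated condition. After this calibration, the remaining steps are bookkeeping.
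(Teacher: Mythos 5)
There is a genuine gap, and it sits exactly where you flag "the main obstacle": the detour through the population covariance $\Sigma$ cannot be calibrated from hypothesis \eqref{eq:delta_bound} as stated. Your matrix-Bernstein/Davis--Kahan step controls $\sin\angle(v^i,v_1(\Sigma))$ by a ratio whose denominator is the \emph{population} gap $m_i(\lambda_1(\Sigma)-\lambda_2(\Sigma))$, but the hypothesis constrains only the \emph{empirical} gap $\delta$ of $A$; under \eqref{eq:delta_bound} alone the population gap could be negligible compared to the Bernstein fluctuations (to relate the two you would need Weyl plus $\|A-m\Sigma\|\lesssim h^2\sqrt{m\log(d/p)}$, a term of order $\sqrt{m}$ that the hypothesis does not dominate). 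Indeed your own heuristic accounting — per-block fluctuation of order $h^2\sqrt{(m/n)\log(d/p)}$ versus per-block gap of order $\delta/n$ — yields the requirement $\delta\gtrsim h^2\sqrt{nm\log(d/p)}$, which is stronger than the stated $\delta\geq\Omega\bigl(\sqrt{n\log(d/p)}\bigr)$ by a factor $\sqrt{m}$, so the claim that this "gives exactly the stated condition" is not correct, and the first step would not go through under the proposition's assumptions. The paper avoids introducing $\Sigma$ altogether: it imports a concentration bound (Lemma 3 of \cite{huang2020communication}) comparing the local covariance directly to the average of the local covariances, giving $\|nA_{i_0}-A\|\leq\sqrt{32n\log(d/p)}\,h$ with no dependence on a population gap, and then applies eigenvector perturbation (Lemmas 5 and 8 there) with the empirical gap $\delta$, so that \eqref{eq:delta_bound} plugs in verbatim to make $1-\langle v^{i_0},x^*\rangle$ a small constant.

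The remainder of your argument is sound and close in spirit to the paper's: once $\langle v^{i_0},x^*\rangle$ and each $\|v^{i_0}-v^i\|$ are constant-controlled (after the sign-fixing you rightly make explicit), the ratio $y/w$ is $O(1)$ and Proposition~\ref{lattice_quantization} gives $\bigO(d)$ bits per machine, hence $\bigO(nd)$ total. Your final bound $\langle x^{(0)},x^*\rangle\geq(\langle v^{i_0},x^*\rangle-w)/(1+w)$ is in fact a slightly cleaner route than the paper's, which bounds $\|x^{(0)}-v^{i_0}\|\leq 2\zeta$ via the nearest-point property of the spherical projection and then converts chordal distances back to inner products. But as written the proposal does not establish the proposition under its stated hypothesis; you would either need to adopt the paper's local-versus-global empirical comparison, or add an assumption on the population eigengap.
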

Thus, if bound \ref{eq:delta_bound} is satisfied, then the communication complexity becomes
\begin{tcolorbox}
\begin{equation*}
    b=\bigO \left(  n d \frac{\gamma}{ \delta } \log \frac{n \gamma}{ \delta}  \log \frac{D}{\epsilon} \right) = \tilde{\bigO} \left(  n d \frac{\gamma}{ \delta } \right)
\end{equation*}
\end{tcolorbox}
many bits in total with probability at least $1-p$ (notice that this can be further simplified using bound \ref{eq:delta_bound}). This is because $D$ in Theorem \ref{thm:QGD} is upper bounded by a constant and the communication cost of quantizing $v^{i_0}$ does not affect the total communication cost. If we can estimate the specific relation in bound \ref{eq:delta_bound} (the constant hidden inside $\Omega$), then we can compute estimations of the quantization parameters in the definition of $x^{(0)}$.

Condition \ref{eq:delta_bound} is quite typical in this literature; see \cite{huang2020communication} and references therein (beginning of page 2), as we also briefly discussed in the introduction. Notice that $\sqrt{n}$ appears in the numerator and not the denominator, only because we deal with the sum of local covariance matrices and not the average, thus our eigengap is $n$ times larger than the eigengap of the normalized covariance matrix.

\section{Experiments}
We evaluate our approach experimentally, comparing the proposed method of Riemannian gradient quantization against three other benchmark methods:

\begin{itemize}
\item Full-precision Riemannian gradient descent: Riemannian gradient descent, as described in Section \ref{sec:RGD}, is performed with the vectors communicated at full (64-bit) precision.
\item Euclidean gradient difference quantization: the `na\"ive' approach to quantizing Riemannian gradient descent. Euclidean gradients are quantized and averaged before being projected to Riemannian gradients and used to take a step. To improve performance, rather than quantizing Euclidean gradients directly, we quantize the difference between the current local gradient and the previous local gradient, at each node. Since these differences are generally smaller than the gradients themselves, we expect this quantization to introduce lower error.
\item Quantized power iteration: we also use as a benchmark a quantized version of power iteration, a common method for leading-eigenvector computation given by the update rule $x^{(t+1)} \gets \frac{Ax^{(t)}}{\|Ax^{(t)}\|}$. $Ax^{(t)}$ can be computed in distributed fashion by communicating and summing the vectors $A_ix^{(t)}, i\le n$. It is these vectors that we quantize.
\end{itemize}

All three of the quantized methods use the same vector quantization routine, for fair comparison.

\begin{figure*}[h]
\hspace{-5mm}\includegraphics[width=.53\textwidth]{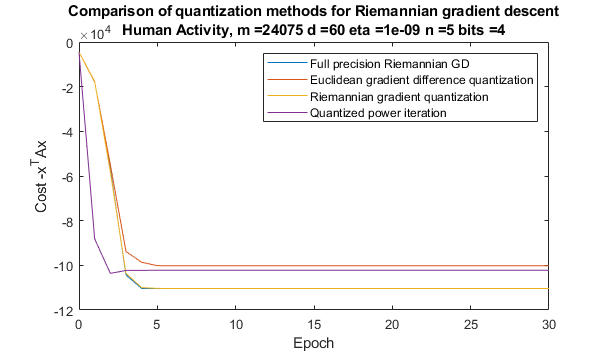}
\hfill
\includegraphics[width=.53\textwidth]{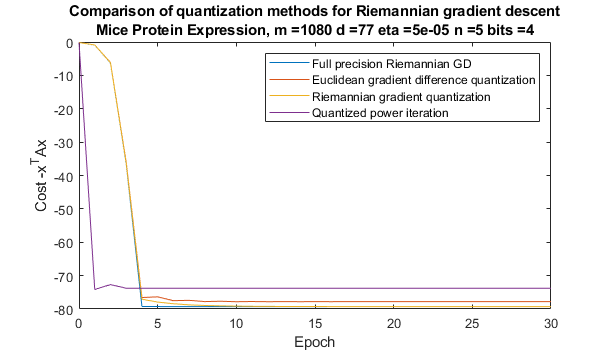}\\
\vspace{1mm}
\hspace{-5mm}\includegraphics[width=.53\textwidth]{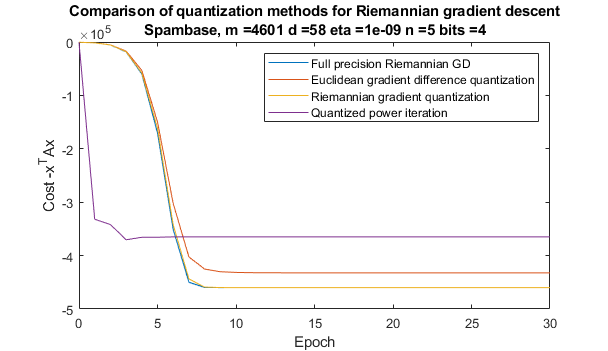}\hfill
\includegraphics[width=.53\textwidth]{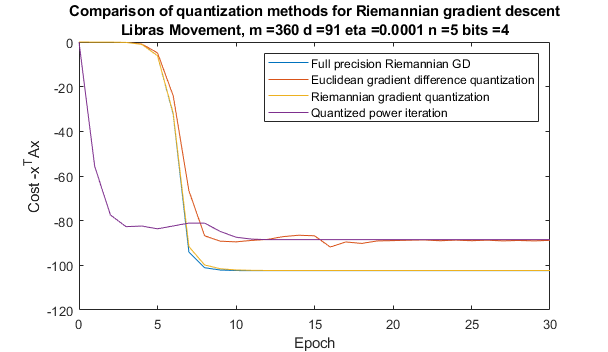}
\caption{Convergence results on real datasets}\label{fig:results}
\end{figure*}

We show convergence results (Figure \ref{fig:results}) for the methods on four real datasets: Human Activity from the MATLAB Statistics and Machine Learning Toolbox, and Mice Protein Expression, Spambase, and Libras Movement from the UCI Machine Learning Repository \cite{Dua:2019}. All results are averages over 10 runs of the cost function $-x^{T} A x$ (for each method, the iterates $x$ are normalized to lie in the $1$-ball, so a lower value of $-x^{T} A x$ corresponds to being closer to the principal eigenvector).

All four datasets display similar behavior: our approach of Riemannian gradient quantization outperforms na\"ive Euclidean gradient quantization, and essentially matches the performance of the full-precision method while communicating only $4$ bits per coordinate, for a $16\times$ compression. Power iteration converges slightly faster (as would be expected from the theoretical convergence guarantees), but is much more adversely affected by quantization, and reaches a significantly suboptimal result. 

We also tested other compression levels, though we do not present figures here due to space constraints: for fewer bits per coordinate (1-3), the performance differences between the four methods are in the same order but more pronounced; however, their convergence curves become more erratic due to increased variance from quantization. For increased number of bits per coordinate, the performance of the quantized methods reaches that of the full-precision methods at between 8-10 bits per coordinate depending on input, and the observed effect of quantized power iteration converging to a suboptimal result gradually diminishes. Our code is publicly available \footnote{\url{https://github.com/IST-DASLab/QRGD}}.

\section{Discussion and Future Work}
We proposed a communication efficient algorithm to compute the leading eigenvector of a covariance data matrix in a distributed fashion using only limited amount of bits. To the best of our knowledge, this is the first algorithm for this problem with bounded bit complexity. We focused on gradient descent since it provides robust convergence in terms of iterates, and can circumvent issues that arise due to a cost function that only has local convexity-like properties; second, it known to be amenable to quantization.

We would like briefly to discuss the round complexity of our method, emphasizing inherent round complexity trade-offs due to quantization. 
   Indeed, the round complexity of our quantized RGD algorithm for accuracy $\epsilon$ is $\tilde \bigO( \frac{\gamma}{a^2 \delta})$, where $\epsilon$ appears inside a logarithm. Specifically, if the initialization $x^{(0)}$ is uniformly random in the sphere (section 5.1), then $a=\bigO(
   1/\sqrt{d})$ . If we have warm start (section 5.2), then $a=\bigO(1)$. Both statements hold with high probability.

   For random initialization, the round complexity of distributed power method is $\tilde \bigO(\frac{\lambda_1}{\delta}) = \tilde \bigO(\frac{\gamma}{\delta})$ (see the table at the beginning of page 4 in \cite{garber2017communication}). This is because the convergence rate of power method is
   $
     \tan(\textnormal{dist}(x_k,x^*)) \leq (1-\lambda_1/\delta)^k \tan(\textnormal{dist}(x_0,x^*)).
   $
   The dimension is again present in that rate, because $\tan (\textnormal{dist}(x_0,x^*))= \Theta(\sqrt{d})$, but it appears inside a logarithm in the final iteration complexity. 

   Since we want to insert quantization, the standard way to do so is by starting from a rate which is contracting (see e.g. \cite{magnusson2020maintaining}). It may in theory be possible to add quantization in a non-contracting algorithm, but we are unaware of such a technique. Thus, we firstly prove that RGD with step-size $\frac{a}{\gamma}$ is contracting (based on weak-convexity and quadratic growth) and then we add quantization, paying the price of having an extra $\bigO(1/a^2)=\bigO(d)$ in our round complexity. Thus, our round complexity is slightly worse than distributed power method, but this is in some sense inherent due to the introduction of quantization. Despite this, our algorithm can achieve low bit complexity and it is more robust in perturbations caused by bit limitations, as our experiments suggest. 

We conclude with some interesting points for future work.

First, our choice of step-size is quite conservative since the local convexity properties of the cost function improve when approaching the optimum. Thus, instead of setting the step-size to be $\eta= a / \gamma$, where $a$ is a lower bound for $\langle x^{(0)},x^* \rangle$, we can re-run the whole analysis with an adaptive step-size $\eta= \langle x^{(t)}, x^* \rangle / \gamma$ based on an improved value for $a$ from $\langle x^{(t)}, x^* \rangle$. This should improve the dependency on the dimension of our complexity results.

Second, the version of Riemannian gradient descent that we have chosen is expensive, since one needs to calculate the exponential map in each step using sin and cos calculations. Practitioners usually use cheaper retraction-based versions, and it would be of interest to study how our analysis can be modified if the exponential map is substituted by a retraction (i.e., any first-order approximation).

Third, it would be interesting to see whether the ideas and techniques presented in this work can also be used for other data science problems in a distributed fashion through communication-efficient Riemannian methods. This could be, for example, PCA for the top $k$ eigenvectors using optimization on the Stiefel or Grassmann manifold~\cite{absil2009optimization} and low-rank matrix completion on the fixed-rank manifold~\cite{vandereycken2013low}.

\begin{ack}
We would like to thank the anonymous reviewers for helpful comments and suggestions. We also thank Aurelien Lucchi and Antonio Orvieto for fruitful discussions at an early stage of this work. FA is partially supported by the SNSF under research project No. 192363 and conducted part of this work while at IST Austria under the European Union’s Horizon 2020 research and innovation programme (grant agreement No. 805223 ScaleML). PD partly conducted this work while at IST Austria and was supported by the European Union’s Horizon 2020 programme under the Marie Skłodowska-Curie grant agreement No. 754411.

\end{ack}

\bibliographystyle{plain}
\bibliography{main}

\newpage
\onecolumn
\appendix

\section{Geometry of the Sphere}
\label{app:sphere}
We analyze here briefly some basic notions of the geometry of the sphere that we use in our algorithm and convergence analysis. We refer the reader to \cite[p.~73--76]{absil2009optimization} for a more comprehensive presentation.

\paragraph{Tangent Space:} The tangent space of the $r$-dimensional sphere $\mathbb{S}^r$ at a point $p$ is an $r$-dimensional vector space, which generalizes the notion of tangent plane in two dimensions. We denote it by $T_p \mathbb{S}^r$ and a vector $v$ belongs in it, if and only if, it can be written as $\dot \alpha(0)$, where $\alpha \colon (-\varepsilon,\varepsilon) \rightarrow \mathbb{S}^r$ (for some $\varepsilon>0$) is a smooth curve with $\alpha(0)=p$. The tangent space at $p$ can be given also in an explicit way, as the set of all vectors in $\mathbb{R}^{r+1}$ orthogonal to $p$ with respect to the usual inner product. Given a vector $w \in \mathbb{R}^{r+1}$, we can always project it orthogonally in any tangent space of $\mathbb{S}^r$. Taking all vectors to be column vectors, the orthogonal projection in $T_p \mathbb{S}^r$ satisfies
\begin{equation*}
    P_p(w)=(I-p p^T) w.
\end{equation*}

\paragraph{Geodesics:} Geodesics on high-dimensional surfaces are defined to be locally length-minimizing curves. On the $d$-dimensional sphere, they coincide with great circles. These can be computed explicitly and give rise to the exponential and logarithmic maps. The \textbf{exponential map} at a point $p$ is defined as $\exp_p: T_p \mathbb{S}^r \rightarrow \mathbb{S}^r$
with $\exp_p(v)=c(1)$, with $c$ being the (unique) geodesic (i.e., \textbf{length minimizing part} of great circle) satisfying $c(0)=p$ and $\dot c(0) = v$. Defining the exponential map in this way makes it invertible with inverse $\log_p$. The exponential and logarithmic map are given by well-known explicit formulas:
\begin{align}\label{eq:exp_log}
    \exp_p(v)= \cos(\|v\|) p + \sin(\|v\|) \frac{v}{\|v\|}, \quad  
    \log_p(q)= \arccos(\langle q, p \rangle) \frac{P_p(q-p)}{\| P_p(q-p) \|}.
\end{align}
The distance between points $p$ and $q$ measured intrinsically on the sphere is
\begin{equation}\label{eq:distance_arccos}
 \textnormal{dist}(p,q) = \| \log_p(q) \| = \arccos(\langle q, p \rangle).
\end{equation}
Notice that $\langle q, p \rangle = \|q\|\|p\| \cos(\angle(p,q)) = \cos(\angle(p,q))$, thus the distance of $p$ and $q$ is actually the angle between them.

The inner product inherited by the ambient Euclidean space $\mathbb{R}^{r+1}$ provides a way to transport vectors from a tangent space to another one, parallelly to the geodesics. This operation is called parallel transport and constitutes an orthogonal isomorphism between the two tangent spaces. We denote $\Gamma_p^q$ for the parallel transport from $T_p M$ to $T_q M$ along the geodesic connecting $p$ and $q$. If $q=\exp_p(t v)$, then parallel transport is given by the formula
\begin{equation*}
    \Gamma_p^q u = \left(I_d+\cos(t \|v\|-1) \frac{v v^T}{\| v \|^2} - \sin(t \|v\|) \frac{p v^t}{\|v\|} \right) u .
\end{equation*}

\paragraph{Riemannian Gradient:} Given a function $f\colon \mathbb{S}^r \rightarrow \mathbb{R}$, we can define its differential at a point $p$ in a tangent direction $v \in T_p \mathbb{S}^r$ as 
\begin{equation*}
    df(p)v= \lim_{t \rightarrow 0} \frac{f(\alpha(t))-f(p)}{t},
\end{equation*}
where $\alpha\colon (-\varepsilon,\varepsilon) \rightarrow \mathbb{S}^r$ is a smooth curve defined such that $\alpha(0)=p$ and $\dot \alpha(0)=v$. Using this intrinsically defined Riemannian differential, we can define the Riemannian gradient at $p$ as a vector $\textnormal{grad}f(p) \in T_p \mathbb{S}^r$, such that
\begin{equation*}
    \langle \textnormal{grad}f(p),v \rangle = df(p)v,
\end{equation*}
for any $v \in T_p \mathbb{S}^r$. In the case of the sphere, we can also compute the Riemannian gradient by orthogonally projecting the Euclidean gradient $\nabla f(p)$ computed in the ambient space into the tangent space of $p$:
\begin{equation*}
    \textnormal{grad}f(p)= P_p (\nabla f(p)).
\end{equation*}

\paragraph{Geodesic Convexity and Smoothness:}
In this paragraph we define convexity-like properties in the context of the sphere, which we employ in our analysis.

\begin{Definition}\label{def:geodesically uniquely convex subset}
A subset $\mathcal{A}\subseteq \mathbb{S}^r$ is called geodesically uniquely convex, if every two points in $\mathcal{A}$ are connected by a unique geodesic.
\end{Definition}
Open hemispheres satisfy this definition, thus they are geodesically uniquely convex. Actually, they are the biggest possible subsets of the sphere with this property.

\begin{Definition}
\label{def:smoothness}
A smooth function $f \colon \mathcal{A} \rightarrow \mathbb{R}$ is called geodesically $\gamma$-smooth if
\begin{equation*}
   \| \textnormal{grad}f(p)- \Gamma_q^p \textnormal{grad}f(q) \| \leq \gamma \  \textnormal{dist}(p,q)
\end{equation*}
for any $p,q \in \mathcal{A}$.
\end{Definition}

\begin{Definition}
\label{def:weakly_quasi_convex}
A smooth function $f\colon \mathcal{A} \rightarrow \mathbb{R}$ defined in a geodesically uniquely convex subset $\mathcal{A} \subset \mathbb{S}^r$ is called geodesically $a$-weakly-quasi-convex (for some $a>0$) with respect to a point $p \in A$ if
\begin{equation*}
    a (f(x)-f(p)) \leq \langle \textnormal{grad}f(x), -\log_x(p) \rangle \qquad \text{for all $x \in \mathcal{A}$.}
\end{equation*}
\end{Definition}
Observe that weak-quasi-convexity implies that any local minimum of $f$ lying in the interior of $\mathcal{A}$ is also a global one.
\begin{Definition}
\label{def:quadratic_growth}
A function $f\colon \mathcal{A} \rightarrow \mathbb{R}$ is said to satisfy quadratic growth condition with constant $\mu$, if
\begin{equation*}
    f(x)-f^* \geq \frac{\mu}{2} \textnormal{dist}^2(x,x^*) \qquad \text{for all $x \in \mathcal{A}$},
\end{equation*}
where $f^*$ is the minimum of $f$ and $x^*$ the global minimizer of $f$ closest to $x$.
\end{Definition}
Quadratic growth condition is slightly weaker than the so-called gradient dominance one:
\begin{Definition}
A function $f\colon \mathcal{A} \rightarrow \mathbb{R}$ is said to be $\mu$-gradient dominated if
\begin{equation*}
   f(x)-f^* \leq \frac{1}{2 \mu}\| \textnormal{grad}f(x) \|^2 \qquad \text{for all $x \in \mathcal{A}$}.
\end{equation*}
\end{Definition}
Gradient dominance with constant $\mu$ implies quadratic growth with the same constant. The proof can be done by a direct Riemannian adaptation of the argument from \cite[p.~13--14]{karimi2020linear}.

\paragraph{Tangent Space Distortion:} The only non-trivial geometric result we use in this work is that the geodesics of the sphere spread more slowly than in a flat space. This is a direct application of the Rauch comparison theorem since the sphere has constant positive curvature.

\begin{Proposition}
\label{prop:tangent_space}
Let $\Updelta abc$ a geodesic triangle on the sphere. Then we have
\begin{equation*}
    \textnormal{dist}(a,b) \leq \| \log_c(a)-\log_c(b) \|.
\end{equation*}
\end{Proposition}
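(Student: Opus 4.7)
The plan is to deduce the inequality from the hinge form of Toponogov's comparison theorem (a standard consequence of Rauch), exactly as the paper announces. The geodesic triangle $\Updelta abc$ defines a hinge at $c$ with side lengths $\beta:=\textnormal{dist}(a,c)=\|\log_c(a)\|$ and $\alpha:=\textnormal{dist}(b,c)=\|\log_c(b)\|$ and included angle $C:=\angle(\log_c(a),\log_c(b))$ measured via the inner product on $T_c\mathbb{S}^r$.

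First, I would compute the right-hand side of the claim inside the tangent space. Applying the Euclidean law of cosines to the two vectors $\log_c(a)$ and $\log_c(b)$ in $T_c\mathbb{S}^r$ yields
\begin{equation*}
\|\log_c(a)-\log_c(b)\|^2 = \alpha^2+\beta^2-2\alpha\beta\cos C.
\end{equation*}
Second, I would bound the left-hand side using Toponogov's hinge theorem: on a complete Riemannian manifold whose sectional curvature is bounded below by $\kappa$, the distance between the endpoints of a hinge is no larger than in the model space of constant curvature $\kappa$. Since $\mathbb{S}^r$ has constant curvature $+1\ge 0$, applying this with $\kappa=0$ (Euclidean model) produces the desired bound $\textnormal{dist}(a,b)\le\|\log_c(a)-\log_c(b)\|$.

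A self-contained alternative, avoiding Toponogov, would start from the spherical law of cosines $\cos\textnormal{dist}(a,b)=\cos\alpha\cos\beta+\sin\alpha\sin\beta\cos C$ and reduce the statement to the scalar inequality
\begin{equation*}
\cos\alpha\cos\beta+\sin\alpha\sin\beta\cos C \;\ge\; \cos\!\sqrt{\alpha^2+\beta^2-2\alpha\beta\cos C} \qquad (\alpha,\beta,C\in[0,\pi]).
\end{equation*}
Proving this scalar inequality is the main technical obstacle if one insists on working from scratch. A clean route is to fix $\alpha,\beta,C$, introduce a scaling parameter $\lambda\in[0,1]$, and show that the difference of the two sides evaluated at $(\lambda\alpha,\lambda\beta,C)$ is a nondecreasing function of $\lambda$ that vanishes at $\lambda=0$ (geometrically, the third side of the hinge on a sphere of radius $1/\lambda$ decreases to its Euclidean limit as $\lambda\to 0$). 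However, since the paper explicitly attributes the result to the Rauch comparison theorem, my preferred plan is the two-step one: combine the Euclidean law of cosines in $T_c\mathbb{S}^r$ with Toponogov's hinge comparison, and avoid the scalar calculation entirely.
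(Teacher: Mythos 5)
Your proposal is correct. The paper does not spell out a proof at all: it simply attributes the inequality to the Rauch comparison theorem, the intended argument being that on a manifold of nonnegative curvature the differential of the exponential map is norm non-increasing (on the unit sphere this is explicit: $d\exp_c$ at a vector of length $r$ acts as the identity radially and scales the orthogonal part by $\sin(r)/r\le 1$), so $\exp_c$ is $1$-Lipschitz and the image of the straight segment joining $\log_c(a)$ and $\log_c(b)$ is a curve from $a$ to $b$ of length at most $\|\log_c(a)-\log_c(b)\|$, which bounds $\textnormal{dist}(a,b)$. Your route instead combines the Euclidean law of cosines in $T_c\mathbb{S}^r$ with Toponogov's hinge comparison for the lower curvature bound $\kappa=0$; this is equally valid, and the hypotheses are met because the sides at $c$ are minimizing geodesics (the paper's $\log_c$ is defined through the minimizing arc, so $\|\log_c(\cdot)\|\le\pi$). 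The trade-off is that Toponogov is a global theorem whose proof is substantially heavier than Rauch's Jacobi-field estimate, whereas the $1$-Lipschitz-exponential argument is essentially self-contained on the sphere since $d\exp_c$ can be written down explicitly; both approaches let you bypass the scalar inequality
\begin{equation*}
\cos\alpha\cos\beta+\sin\alpha\sin\beta\cos C \;\ge\; \cos\!\sqrt{\alpha^2+\beta^2-2\alpha\beta\cos C},
\end{equation*}
which, as you note, is the only genuinely technical obstacle in a from-scratch verification (and which you sketch but do not carry out — if you wanted that version you would still have to supply the monotonicity-in-$\lambda$ argument, including the easy remark that the claim is trivial when the Euclidean side length exceeds $\pi$).
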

Notice that in Euclidean space we have equality in this bound.

\section{Properties of the Cost and Convergence of Single-Node Gradient Descent}
\label{app:conv_properties}
In this appendix, we will prove the convergence of Riemannian gradient descent applied to
\[
 \min_{x \in \mathbb{S}^{d-1} } f(x) \qquad \text{with $f(x) = -x^T A x$}
\]
and $A \in \mathbb{R}^{d \times d}$ a symmetric and positive definite matrix. In the following, we will denote the eigenvalues of $A$ by $\lambda_1 > \lambda_2 \geq \cdots \geq \lambda_d > 0$ and their corresponding orthonormal eigenvectors by $v_1, v_2, \ldots, v_d$. Denote the spectral gap by $\delta = \lambda_1 - \lambda_2>0$.

Let $f^* = -\lambda_1$ denote the minimum of $f$ on $\mathbb{S}^{d-1}$ which is attained at $x^* = v_1$.

\weakconvex*

\begin{proof}
For any $x \in B_{a}$, we can write
\begin{equation}\label{eq:expansion_x_Ax}
    x=\sum_{i=1}^d \alpha_i v_i, \qquad Ax= \sum_{i=1}^d \lambda_i \alpha_i v_i
\end{equation}
for some scalars $\alpha_i$. Recall that $\alpha_1 \geq a > 0$ by definition~\eqref{eq:def_Ba} of $B_{a}$.

With the orthogonal projector $P_x = I-x x^T$ onto the tangent space $T_x \mathbb{S}^{d-1}$, we get from~\eqref{eq:exp_log} that
\begin{align*}
    \langle \textnormal{grad}f(x),-\log_x(x^*) \rangle &= \langle P_x \nabla f(x), \frac{\textnormal{dist}(x,x^*)}{\|P_x (x-x^*)\|} P_x(x-x^*) \rangle \\ &= \frac{\textnormal{dist}(x,x^*)}{\|P_x (x-x^*)\|} \langle P_x \nabla f(x), x-x^* \rangle.
\end{align*}
because $P_x^2=P_x$.

Direct calculation now gives
\begin{align*}
   \langle P_x \nabla f(x), x-x^* \rangle 
   &= -2 x^T A x + 2 \langle Ax,x^* \rangle-2 f(x) \|x\|^2+2 f(x) \langle x,x^* \rangle \\
   &= 2 f(x) +2 \lambda_1 \alpha_1-2 f(x)+2 f(x) \alpha_1   \\
   &= 2 \alpha_1 (f(x)+\lambda_1)= 2 \alpha_1 (f(x)-f^*) \geq 0.
\end{align*}
It is easy to verify that $\textnormal{dist}(x,x^*) \geq \|P_x (x-x^*)\|$. We thus obtain
\[
 \langle \textnormal{grad}f(x),-\log_x(x^*) \rangle \geq  2 \alpha_1 (f(x)-f^*),
\]
which gives the desired result since $\alpha_1 \geq a$.
\end{proof}

\quadraticgrowth*

\begin{proof}
The proof follows the one in \cite[Lemma 2]{xu2018convergence}. Using the expansions in~\eqref{eq:expansion_x_Ax}, we get
\begin{align*}
    x^T A x = \sum_{i=1}^d \lambda_i \alpha_i^2 = \lambda_1 \alpha_1^2+ \sum_{i=2}^d \lambda_i \alpha_i^2 \leq \lambda_1 \alpha_1^2+\lambda_2 (1-\alpha_1^2)
\end{align*}
since $\|x\|^2=1=\sum_{i=1}^d \alpha_i^2$. From~\eqref{eq:distance_arccos}, we have that $\alpha_1= \cos(\textnormal{dist}(x,x^*))$ and so
\begin{equation*}
    x^T A x \leq \lambda_1 \cos^2(\textnormal{dist}(x,x^*))+\lambda_2 \sin^2 (\textnormal{dist}(x,x^*)).
\end{equation*}
Direct calculation now shows
\begin{align*}
    f(x)-f^* &= -x^T A x + \lambda_1 \geq \lambda_1-\lambda_1\cos^2(\textnormal{dist}(x,x^*)) - \lambda_2 \sin^2 (\textnormal{dist}(x,x^*)) \\ &= \lambda_1 \sin^2 \textnormal{dist}(x,x^*)) - \lambda_2 \sin^2 (\textnormal{dist}(x,x^*))= \delta \sin^2 ( \textnormal{dist}(x,x^*)). 
\end{align*}
Since $x \in B_a$ with $a>0$, we have that $x$ and $x^*$ are in the same hemisphere and thus $d = \textnormal{dist}(x,x^*) \leq \pi/2$. The desired result follows using $\sin(\phi) \geq \phi/2$ for $0 \leq \phi \leq \pi/2$.
\end{proof}

\weakstrongconvex*

\begin{proof}
From quadratic growth and weak-quasi-convexity, we have
\begin{equation*}
    \frac{\mu}{2} \textnormal{dist}^2(x,x^*) \leq f(x)-f^* \leq \frac{1}{2 a} \langle \textnormal{grad}f(x), -\log_x(x^*) \rangle.
\end{equation*}
Now, again by weak-quasi-convexity
\begin{align*}
    f(x)-f^*  &\leq \frac{1}{2 a} \langle \textnormal{grad}f(x), -\log_x(x^*) \rangle+\frac{\mu}{2} \textnormal{dist}^2(x,x^*)-\frac{\mu}{2} \textnormal{dist}^2(x,x^*) \\ & \leq \frac{1}{a} \langle \textnormal{grad}f(x), -\log_x(x^*) \rangle -\frac{\mu}{2} \textnormal{dist}^2(x,x^*)
\end{align*}
by substituting the previous inequality. 
\end{proof}

\PL*

\begin{proof}
By Proposition \ref{prop:weak_strong_convex}, we have
\begin{equation*}
   f(x)-f^* \leq \frac{1}{a} \langle \textnormal{grad}f(x), -\log_x(x^*) \rangle - \frac{\delta}{4} \textnormal{dist}^2 (x,x^*)
\end{equation*}
since, in our case, $\mu=\delta / 2$. Using $\langle x,y \rangle \leq \tfrac{1}{2}(\|x\|^2 + \|y\|^2)$ for all $x,y\in\mathbb{R}^d$,  we can write for any positive $\rho$ that
\begin{equation*}
    \langle \textnormal{grad}f(x), -\log_x(x^*) \rangle \leq \frac{\rho}{2} \| \textnormal{grad}f(x) \|^2 + \frac{1}{2 \rho} \| \log_x(x^*) \|^2.
\end{equation*}
Combining with $\rho=\frac{2}{a \delta}$ and using~\eqref{eq:distance_arccos}, we get
\begin{equation*}
    f(x)-f^* \leq \frac{1}{a}  \frac{1}{a \delta} \| \textnormal{grad}f(x) \|^2  + \frac{1}{a} \frac{a \delta}{4} \textnormal{dist}^2(x,x^*) -\frac{\delta}{4} \textnormal{dist}^2(x,x^*) =  \frac{1}{a^2 \delta} \| \textnormal{grad}f(x) \|^2.  \qedhere
\end{equation*}
\end{proof}

\smoothness*

\begin{proof}
The proof can be found also in \cite[Lemma 1]{huang2020communication}. For $p\in \mathbb{S}^{d-1}$ and $v \in T_p \mathbb{S}^{d-1}$ with $\| v \|=1$, we have that the Riemannian Hessian of $f$ satisfies
\begin{align*}
    \langle v, \nabla^2 f(p) v \rangle = \langle v, -(I-pp^T)2Av+p^T 2 A p v   \rangle = -2 v^T A v+ 2 p^T A p \leq 2 \lambda_1
\end{align*}
because $v^T A v \geq 0 $ (since $A$ is symmetric and positive semi-definite) and $p^T A p \leq \lambda_1$, by the definition of eigenvalues and $\|p\|=1$. We have also used that $\|v\|=1$ and $\langle p,v \rangle=0$. Notice now that the largest eigenvalue of the Hessian is the maximum of $\langle v, \nabla^2 f(p) v \rangle$ over all $v \in T_p \mathbb{S}^r$ with $\|v\|=1$, thus less or equal than $2 \lambda_1$. This result easily implies Def.~\ref{def:smoothness}, since the Riemannian Hessian is the covariant derivative of the Riemannian gradient.
\end{proof}

\GDconvergence*

\begin{proof}
By definition of $x^{(t+1)}$, we have $\textnormal{log}_{x^{(t)}}(x^{(t+1)}) = -\eta \, \textnormal{grad}f(x^{(t)})$. Applying Proposition~\ref{prop:tangent_space}, we can thus write
\begin{align*}
        \textnormal{dist}^2(x^{(t+1)},x^*) &\leq \|-\eta \, \textnormal{grad}f(x^{(t)})-\textnormal{log}_{x^{(t)}}(x^*) \|^2   \\ & = \eta^2 \|\textnormal{grad}f(x^{(t)})\|^2 + \| \textnormal{log}_{x^{(t)}}(x^*) \|^2 +2 \eta \langle \textnormal{grad}f(x^{(t)}),\textnormal{log}_{x^{(t)}}(x^*) \rangle.
    \end{align*}
    
    By Proposition~\ref{prop:weak_strong_convex}  and \ref{prop:smoothness}, we have
    \begin{align*}
        \frac{1}{a} \langle \textnormal{grad}f(x^{(t)}),\textnormal{log}_{x^{(t)}}(x^*) \rangle &\leq f^*-f(x^{(t)})-\frac{\mu}{2} \textnormal{dist}^2(x^{(t)},x^*) \\
        &\leq -\frac{1}{2 \gamma} \| \textnormal{grad}f(x^{(t)}) \|^2-\frac{\mu}{2} \textnormal{dist}^2(x^{(t)},x^*).
    \end{align*}
    Multiplying with $2 \eta a$ and using $\eta \leq a / \gamma$, we get
    \begin{align*}
        2 \eta \langle \textnormal{grad}f(x^{(t)}),\textnormal{log}_{x^{(t)}}(x^*) \rangle &\leq -\frac{\eta a}{\gamma} \| \textnormal{grad}f(x^{(t)}) \|^2-\mu \eta a \, \textnormal{dist}^2(x^{(t)},x^*) \\ & \leq -\eta^2 \| \textnormal{grad}f(x^{(t)}) \|^2 - \mu \eta a \, \textnormal{dist}^2(x^{(t)},x^*).
         \end{align*}
         Substituting to the first inequality, we get the desired result.
\end{proof}

\section{Convergence of Distributed Gradient Descent on the Sphere}
\label{app:convergence_QGD}

\QGD*

\begin{proof}
We do the proof by induction.
We start from the case that $t=0$. The first inequality is direct by the definition of $D$. 

For the second one we have
\begin{equation*}
    \|\textnormal{grad} f_i(x^{(0)})-\textnormal{grad} f_{i_0} (x^{(0)})\| \leq \|\textnormal{grad} f_i(x^{(0)})\|+\|\textnormal{grad} f_{i_0} (x^{(0)})\| \leq 2 \gamma \pi
\end{equation*}
and by the definition of quantization, we get
\begin{equation*}
    \|\textnormal{grad} f_i(x^{(0)})-q_i^{(0)}\| \leq \frac{\theta R^{(0)}}{2n}.
\end{equation*}
Similarly for the third one, we have
\begin{equation*}
    \|\textnormal{grad} f(x^{(0)})-r^{(0)}\| \leq \sum_{i=1}^n \| \textnormal{grad} f_i(x^{(0)})-q_i^{(0)} \| \leq \frac{\theta R^{(0)}}{2}.
\end{equation*}

Then,
\begin{equation*}
    \| r^{(0)}-\textnormal{grad} f_{i}(x^{(0)}) \| \leq \|r^{(0)}-\textnormal{grad} f(x^{(0)}) \| + \| \textnormal{grad} f(x^{(0)})- \textnormal{grad} f_{i}(x^{(0)}) \| \leq \frac{\theta R^{(0)}}{2} + 2 \pi \gamma. 
\end{equation*}
By the definition of the quantization, we have
\begin{equation*}
    \|q^{(0)}-\textnormal{grad} f(x^{(0)})\| \leq \frac{\theta R^{(0)}}{2} \leq \theta R^{(0)}.
\end{equation*}
We assume now that the inequalities hold for $t$ and we wish to prove that they continue to hold for $t+1$.

We start with the first one and denote by $ \Tilde x^{(t+1)}$ the iteration of exact gradient descent starting from $x^{(t)}$. Since $\textnormal{dist}(x^{(t)},x^*) \leq D$, we have that $x^{(t)} \in B_a$ with $a=\cos(D)$.

We have
\begin{align*}
    \textnormal{dist}(x^{(t+1)},x^*) \leq \textnormal{dist}(x^{(t+1)},\Tilde x^{(t+1)})+\textnormal{dist}(\Tilde x^{(t+1)},x^*) \leq \|\eta \textnormal{grad}f(x^{(t)})-\eta q^{(t)} \|+ \sqrt{\sigma} \textnormal{dist}(x^{(t)},x^*).
\end{align*}
We have the last inequality, because $\textnormal{dist}(\Tilde x^{(t+1)},x^*) \leq \sqrt{\sigma} \textnormal{dist}(x^{(t)},x^*) $ by Proposition \ref{prop:GDconvergence} and $\textnormal{dist}(x^{(t+1)},\Tilde x^{(t+1)}) \leq \| \log_{x^{(t)}} (x^{(t+1)})- \log_{x^{(t)}} (\Tilde x^{(t+1)})\| = \|\eta \textnormal{grad}f(x^{(t)})-\eta q^{(t)} \|$ by Proposition \ref{prop:tangent_space}.

Thus

\begin{align*}
\textnormal{dist}(x^{(t+1)},x^*) \leq \frac{a}{\gamma} \theta R^{(t)}+\sqrt{\sigma} (\sqrt{\xi})^{t} D \leq \theta K (\sqrt{\xi})^{t} D +\sqrt{\sigma} (\sqrt{\xi})^{t} D \leq (\theta K+\sqrt{\sigma}) (\sqrt{\xi})^{t} D \leq (\sqrt{\xi})^{t+1} D
\end{align*}
which concludes the induction for the first inequality.

For the second inequality, we have
\begin{align*}
    \| \textnormal{grad} f_i(x^{(t+1)})- \Gamma_{x^{(t)}}^{x^{(t+1)}} q_i^{(t)} \| &  \leq\|\textnormal{grad} f_i(x^{(t+1)})-\Gamma_{x^{(t)}}^{x^{(t+1)}}\textnormal{grad} f_i(x^{(t)}) \| + \| \Gamma_{x^{(t)}}^{x^{(t+1)}} \textnormal{grad} f_i(x^{(t)})- \Gamma_{x^{(t)}}^{x^{(t+1)}}q_i^{(t)} \| 
     \\ 
     &\leq \gamma_i \textnormal{dist}(x^{(t+1)},x^{(t)}) + \| \textnormal{grad} f_i(x^{(t)})-q_i^{(t)} \| 
    \\ & \leq 2 \frac{\gamma}{n} (\sqrt{\xi})^t D+\theta \frac{R^{(t)}}{n} = 2 \frac{\gamma}{n} (\sqrt{\xi})^t D+ \theta \gamma K (\sqrt{\xi})^t D/n \\
    &= (2/K+\theta) K \gamma (\sqrt{\xi})^t D/n \leq (\sqrt{\sigma}+\theta K) K \gamma (\sqrt{\xi})^t D/n = \frac{R^{(t+1)}}{n}
\end{align*}
and by the definition of the quantization scheme, we have
\begin{equation*}
    \| \textnormal{grad} f_i(x^{(t+1)})-q_i^{(t+1)} \| \leq \frac{\theta R^{(t+1)}}{2n}.
\end{equation*}
For the third inequality, we have
\begin{equation*}
    \| r^{(t+1)}-\textnormal{grad} f(x^{(t+1)}) \| \leq \sum_{i=1}^n \| q_i^{(t+1)}-\textnormal{grad} f_i(x^{(t+1)}) \| \leq \frac{\theta R^{(t+1)}}{2}
\end{equation*}
and
\begin{align*}
    \| r^{(t+1)}- \Gamma_{x^{(t)}}^{x^{(t+1)}}q^{(t)} \| &\leq \|r^{(t+1)}-\textnormal{grad} f(x^{(t+1)})\| + \| \textnormal{grad} f(x^{(t+1)})- \Gamma_{x^{(t)}}^{x^{(t+1)}}\textnormal{grad} f(x^{(t)}) \|\\ & +\| \Gamma_{x^{(t)}}^{x^{(t+1)}} \textnormal{grad} f(x^{(t)}) - \Gamma_{x^{(t)}}^{x^{(t+1)}} q^{(t)}\| \\ & \leq \frac{\theta R^{(t+1)}}{2}+  \gamma \textnormal{dist}(x^{(t+1)},x^{(t)}) + \theta R^{(t)}\\ & \leq \frac{\theta R^{(t+1)}}{2}+  R^{(t+1)} = \left(1+\frac{\theta}{2} \right) R^{(t+1)}
\end{align*}
by using again the argument for deriving the second inequality. The last inequality implies that
\begin{equation*}
    \| \textnormal{grad} f(x^{(t+1)}) - q^{(t+1)} \| \leq \frac{\theta R^{(t+1)}}{2}
\end{equation*}
by the definition of quantization. Summing the two last inequalities completes the induction.
\end{proof}

\maintheorem*

\begin{proof}
For computing the cost of quantization at each step, we use Proposition \ref{lattice_quantization}.

The communication cost of encoding each $\textnormal{grad}f_i$ at $t=0$
\begin{equation*}
    \bigO \left(d \log \frac{2 \gamma \pi}{\frac{\theta R^{(0)}}{2n}} \right)= \bigO \left(d\log \frac{4n\gamma \pi}{\theta \gamma K \pi} \right) \leq \bigO \left(d\log \frac{2n}{\theta} \right).
\end{equation*}

Now we use that $\sigma \geq \frac{1}{2}$  and have 
\begin{equation*}
    \frac{1}{\theta}=\frac{4}{\sqrt{\sigma}(1-\sqrt{\sigma})} \leq \frac{12}{1-\sigma} = \frac{12}{\cos(D) \eta \mu}.
\end{equation*}

Thus, the previous cost becomes
\begin{equation*}
    \bigO \left(d\log \frac{2 \gamma \pi}{\frac{\theta R^{(0)}}{2n}} \right) = \bigO \left(d \log \frac{n}{\cos(D) \eta \mu} \right).
\end{equation*}

The communication cost of deconding each $q_i^{(0)}$ in the master node is

\begin{equation*}
    \bigO \left (d\log\frac{2 \gamma \pi+\frac{\theta R^{(0)}}{2}}{\frac{\theta R^{(0)}}{2}} \right) \leq \bigO \left(d\log \frac{2 \gamma \pi}{\frac{\theta R^{(0)}}{2}} \right) = \bigO \left(d\log \frac{1}{\cos(D) \eta \mu} \right).
\end{equation*}

This is because $2 \gamma \pi \geq \frac{\theta R^{(0)}}{2}$.

Thus, the total communication cost at $t=0$ is
\begin{equation*}
    \bigO \left(n d \log \frac{n}{\cos(D) \eta \mu} \right).
\end{equation*}

For $t>0$, the cost of encoding $\textnormal{grad}f_i$'s is
\begin{equation*}
   \bigO \left(n d \log \frac{R^{(t+1)}/n}{\theta R^{(t+1)}/2n}   \right) = \bigO \left(n d \log  \frac{2}{\theta }   \right) = \bigO \left(n d \log \frac{1}{\cos(D) \eta \mu} \right).
\end{equation*}
as before.

The cost of decoding in the master node is
\begin{equation*}
    \bigO \left(n d \log \frac{(1+\theta/2)R^{(t+1)}}{\theta R^{(t+1)}/2}   \right) = \bigO \left(n d \log  \frac{1}{\theta }   \right) = \bigO \left(n d \log \frac{1}{\cos(D) \eta \mu} \right).
\end{equation*}
because $\theta/2 \leq 1$.

Thus, the cost in each round of communication is in general bounded by

\begin{equation*}
    \bigO \left(n d \log \frac{(1+\theta/2)R^{(t+1)}}{\theta R^{(t+1)}/2}   \right) = \bigO \left(n d \log \frac{n}{\cos(D) \eta \mu} \right).
\end{equation*}

Our algorithm reaches accuracy $a$ in function values if
\begin{equation*}
    \textnormal{dist}(x^{(t)},x^*) \leq \epsilon.
\end{equation*}

We can now write
\begin{align*}
    \textnormal{dist}^2(x^{(t)},x^*) \leq \xi^t D^2 \leq e^{-(1-\xi)t} D^2. 
\end{align*}
Thus, we need to run our algorithm for 
\begin{equation*}
    \bigO \left(\frac{1}{1-\xi} \log \frac{D}{\epsilon} \right) \leq \bigO \left(\frac{1}{\cos(D) \mu \eta} \log \frac{D}{\epsilon} \right)
\end{equation*}
many iterates to reach accuracy $a$. 

The total communication cost for doing that is
\begin{equation*}
    \bigO \left(\frac{1}{\cos(D) \mu \eta} \log \frac{D}{\epsilon} n d \log \frac{n}{\cos(D) \eta \mu} \right) = \bigO \left(  n d \frac{1}{\cos(D) \mu  \eta } \log \frac{n}{\cos(D) \mu \eta}  \log \frac{D}{\epsilon} \right)
\end{equation*}
Substituting 
\begin{equation*}
    \mu=\frac{\delta}{2}
\end{equation*}
by Proposition \ref{prop:quadratic_growth}, we get
\begin{equation*}
    \bigO \left(  n d \frac{1}{\cos(D) \delta  \eta } \log \frac{n}{\cos(D) \delta \eta}  \log \frac{D}{\epsilon} \right)
\end{equation*}
many bits in total.
\end{proof}

\section{Initialization}
\label{app:Initialization}

\subsection{Uniformly Random Initialization}

We estimate numerically the constant $c$ that is used in~\eqref{eq:minimal_a_with_random_point} of Section~\ref{sec:random_init}.

Let $x^{(0)}$ be chosen from a uniform distribution on the sphere $\mathbb{S}^{d-1}$. We are interested in $\alpha_1 = v_1^T x^{(0)}$ for some fixed $v_1 \in \mathbb{S}^{d-1}$. By spherical symmetry, $\alpha_1$ is distributed in the same way as the first component of $x^{(0)}$. Let $A_d(h)$ be the surface of the hyperspherical cap of $\mathbb{S}^{d-1}$ with height $h \in [0,1]$. Then it is obvious that
\[
 \mathbb{P}(|\alpha_1| \geq a) = A_d(1-a) / A_d(1) = I_{1-a^2}(\tfrac{d-1}{2},\tfrac{1}{2}),
\]
where we used the well-known formula for $A_d(h)$ in terms of the regularized incomplete Beta function $I_x(a,b)$; see, e.g., \cite{liConciseFormulasArea2011}. Solving the above expression\footnote{This can be conveniently done using \url{https://docs.scipy.org/doc/scipy/reference/generated/scipy.special.betaincinv.html}} for $a$ when it equals a given probability $1-p$, we can calculate the interval $[-1,-a] \cup [a,1]$ in which $\alpha_1$ will lie for a random $x^{(0)}$ up to probability $1-p$. 

In the figure below, we have plotted these values of $a$ divided by $p / \sqrt{d}$ for $p = 10^{-1},10^{-2},10^{-3},10^{-4}$. Numerically, there is strong evidence that $a \geq c p / \sqrt{d}$ with $c=1.25$.

\begin{center}
    \includegraphics[width=0.5\linewidth]{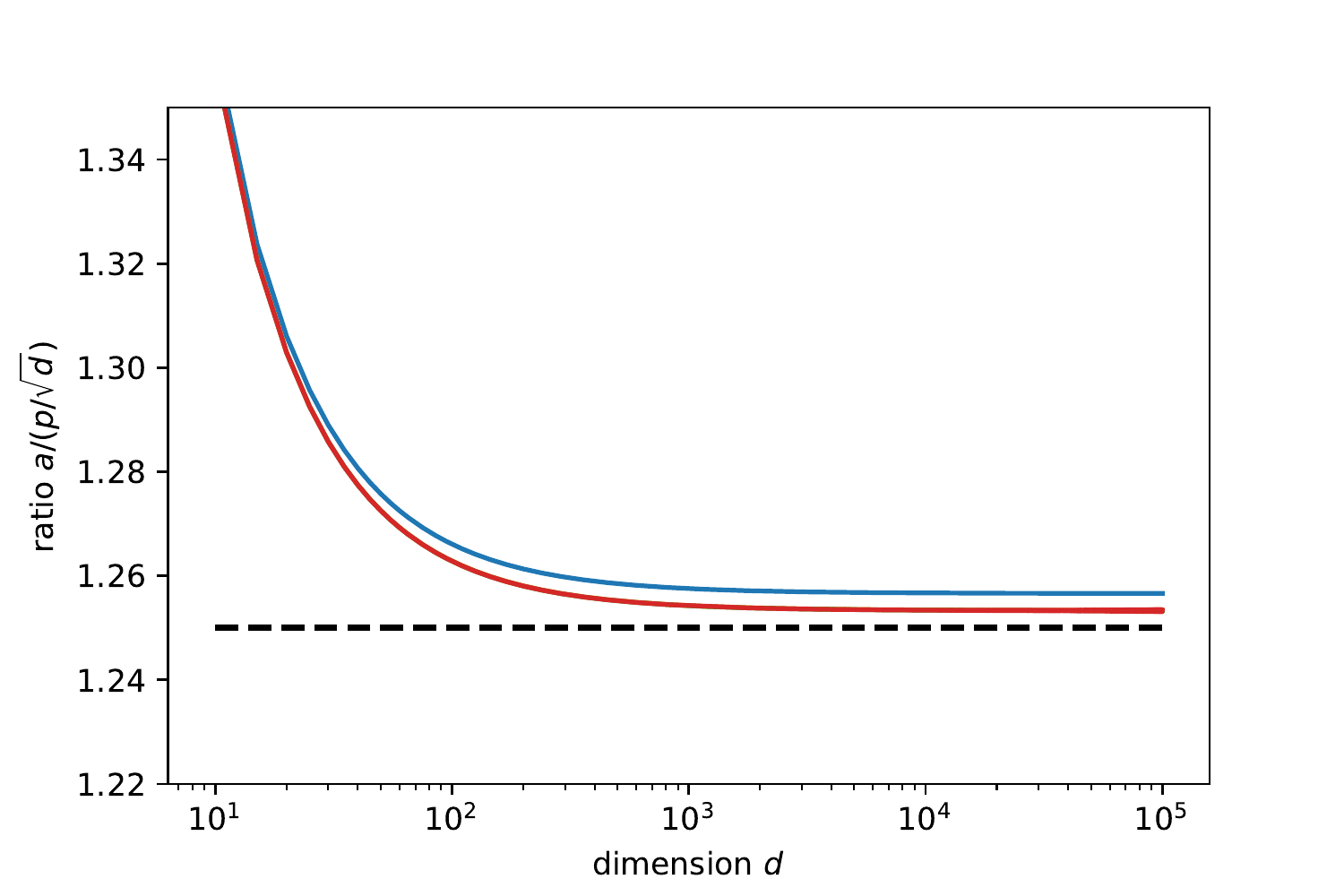}
\end{center}

\subsection{Initialization in the Leading Eigenvector of the Master}

\initeigen*

\begin{proof}
By Lemma 3 in \cite{huang2020communication} we have that
\begin{equation*}
    \left\| A_{i_0}- \frac{1}{n} \sum_{i=1}^n A_i \right\|^2 \leq \frac{32 \log \left(\frac{d}{p} \right) h^2}{n}
\end{equation*}
which implies that
\begin{equation*}
    \left\| n A_{i_0}-\sum_{i=1}^n A_i \right\|^2 \leq 32 n \log \left(\frac{d}{p} \right) h^2
\end{equation*}
with probability at least $1-p$. Of course $\sum_{i=1}^n A_i=A$.

From this bound we can derive a bound for the distance between the eigenvectors of the two matrices. Indeed, using Lemmas 5 and 8 in \cite{huang2020communication}, we can derive
\begin{equation*}
    1-\langle v^{i_0}, x^* \rangle \leq \frac{\sqrt{128 n \log \left( \frac{d}{p} \right)} h}{\delta}
\end{equation*}
and
\begin{equation*}
    \langle v^{i_0}, x^* \rangle \geq 1-\frac{\sqrt{128 n \log \left( \frac{d}{p} \right)} h}{\delta}
\end{equation*}
with probability  at least $1-p$ (note that the leading eigenvector of $A_{i_0}$ is equal to the leading eigenvector $n A_{i_0}$). This is because $\langle v^{i_0}, x^* \rangle \leq 1$, which implies that $\langle v^{i_0}, x^* \rangle^2 \leq \langle v^{i_0}, x^* \rangle$.

We notice that the squared distance of $v^{i_0}$ and $x^*$ is
\begin{equation*}
    \| v^{i_0}-x^* \|^2 = \| v^{i_0} \|^2 + \| x^* \|^2 -2 \langle v^{i_0}, x^* \rangle =2 (1-\langle v^{i_0}, x^* \rangle) \leq 2 \frac{\sqrt{128 n \log \left( \frac{d}{p} \right)} h}{\delta}
\end{equation*}
which is upper bounded by a constant by Assumption \ref{eq:delta_bound}.
The same holds for $\| v^i-x^* \|$, thus, by triangle inequality we have an upper bound on $\| v^{i_0}-v^i\|$ to be at most double of the upper bound for $\| v^{i_0}-x^* \| $, thus it is still upper bounded by a constant. Since $\langle v^{i_0}, x^* \rangle$ is lower bounded by a constant, again by Assumption \ref{eq:delta_bound}, we have that the ratio of the input to the output variance in the quantization of $v^{i_0}$ is upper bounded by a constant. Thus, the total communication cost of this quantization is $\bigO(nd)$.

By the definition of the quantization scheme, we get
\begin{equation*}
    \|\Tilde x^{(0)}- v^{i_0} \| \leq \frac{\langle  v^{i_0},x^* \rangle}{2(\sqrt{2}+2)}=:\zeta .
\end{equation*}

For the projected vector $x^{(0)}$, we have
\begin{equation*}
    \|x^{(0)}- v^{i_0} \| \leq \|\Tilde x^{(0)}- v^{i_0} \|+\|\Tilde x^{(0)}- x^{(0)} \| \leq 2 \| \Tilde x^{(0)}- v^{i_0} \| \leq 2 \zeta
\end{equation*}
because $x^{(0)}$ is the closest point to $\Tilde x^{(0)}$ belonging to the sphere and $v^{i_0}$ belongs also to the sphere.

By the triangle inequality, we have
\begin{equation*}
    \|x^{(0)}-x^* \| \leq \| v^{i_0}-x^* \|+\|x^{(0)}- v^{i_0} \|
\end{equation*}

which is equivalent to
\begin{equation*}
    \sqrt{2(1-\langle x^{(0)},x^* \rangle) } \leq \sqrt{2(1- \langle  v^{i_0},x^* \rangle)}+2\zeta.
\end{equation*}

Thus
\begin{align*}
     \langle x^{(0)},x^* \rangle & \geq \langle  v^{i_0},x^* \rangle-\sqrt{2(1-\langle  v^{i_0},x^* \rangle)} \zeta -2 \zeta^2 \geq \langle  v^{i_0},x^* \rangle-\sqrt{2} \zeta - 2\zeta  \\ &= \langle  v^{i_0},x^* \rangle-(\sqrt{2}+2) \zeta =\langle  v^{i_0},x^* \rangle - (\sqrt{2}+2) \frac{\langle  v^{i_0},x^* \rangle}{2 (\sqrt{2}+2)} = \frac{\langle  v^{i_0},x^* \rangle}{2}
\end{align*}
with probability at least $1-p$.

Since $\langle  v^{i_0},x^* \rangle$ is lower bounded by a constant, $\langle x^{(0)},x^* \rangle$ is also lower bounded by a constant and we get the desired result.

\end{proof}

\end{document}